\documentclass[12pt]{amsart}
\usepackage{amssymb}
\usepackage{amsmath}
\usepackage{amscd}
\usepackage{verbatim}
\usepackage[normalem]{ulem}
\usepackage{amssymb,amsfonts,pstricks,epsf}
\usepackage{graphicx}
\usepackage{subcaption}
\usepackage{epsfig}
\usepackage{color}
\usepackage{stmaryrd}
\usepackage{mathrsfs}
\usepackage{marginnote}
\usepackage{geometry}
 \geometry{
 left=1in, right=1in,
 top=30mm,
 }

\newtheorem{theorem}{Theorem}[section]

\newtheorem{proposition}[theorem]{Proposition}
\newtheorem{corollary}[theorem]{Corollary}

\newtheorem{definition}[theorem]{Definition}
\newtheorem{problem}[theorem]{Problem}
\newtheorem{example}{Example}[section]

\definecolor{plum}{rgb}{1.0, 0.0, 1.0}

\DeclareMathOperator{\osc}{\textup{osc}}
\DeclareMathOperator{\esssup}{\textup{ess\ sup}}
\DeclareMathOperator{\essinf}{\textup{ess\ inf}}

\makeatletter
\@namedef{subjclassname@2020}{%
  \textup{2020} Mathematics Subject Classification}
\makeatother

\begin{document}

\title{Extremizing Temperature Functions of Rods with Robin Boundary Conditions}

\author{Jeffrey J. Langford and Patrick McDonald}

\address{Department of Mathematics, Bucknell University, Lewisburg, Pennsylvania 17837}

\email{jeffrey.langford@bucknell.edu}

\address{Division of Natural Science, New College of Florida, Sarasota, FL 34243}

\email{mcdonald@ncf.edu}

\date{\today}

\begin{abstract}
We compare the solutions of two one-dimensional Poisson problems on an interval with Robin boundary conditions, one with given data, and one where the data has been symmetrized. When the Robin parameter is positive and the symmetrization is symmetric decreasing rearrangement, we prove that the solution to the symmetrized problem has larger increasing convex means. When the Robin parameter equals zero (so that we have Neumann boundary conditions) and the symmetrization is decreasing rearrangement, we similarly show that the solution to the symmetrized problem has larger convex means.
\end{abstract}

\keywords{Symmetrization, comparison theorems, Poisson's equation, Robin boundary conditions}
 
\subjclass[2020]{Primary 34B08; Secondary 34C10}

\maketitle

\section{Introduction: Physical Motivation and Main Results}
Our paper is motivated by the following physical problem:

\begin{problem}\label{Prob:1dRod}
Consider a metal rod of length $\ell$. To half the locations on the rod, heat is generated uniformly, while on the remaining half of the rod, heat is neither generated nor absorbed. If the rod's ends are frozen at zero temperature, where should we place the heat sources to maximize the hottest steady-state temperature across the rod?
\end{problem}
Several possible arrangements appear in Figure 1 below. Heat is generated in the white regions while heat is neither generated nor absorbed in the gray regions.

\begin{figure}[h]
\centering
\begin{subfigure}[t]{.4\textwidth}
\centering
\includegraphics[width=\linewidth]{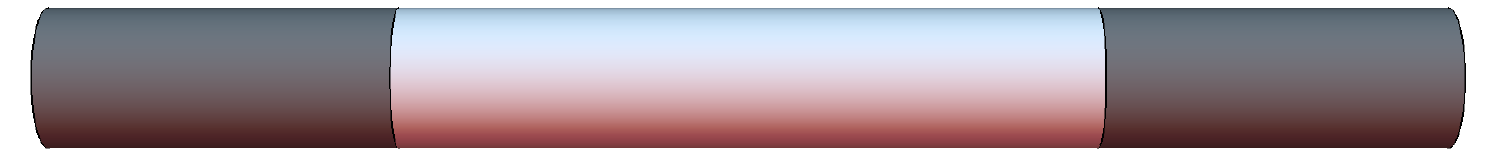}
        \caption{}\label{fig:fig_a}
\end{subfigure}
\begin{subfigure}[t]{.4\textwidth}
\centering
\includegraphics[width=\linewidth]{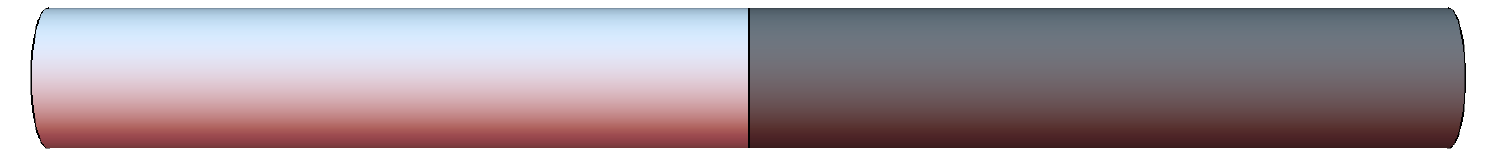}
\caption{}\label{fig:fig_b}
\end{subfigure}

\medskip
\begin{subfigure}[t]{.4\textwidth}
\centering
\vspace{0pt}
\includegraphics[width=\linewidth]{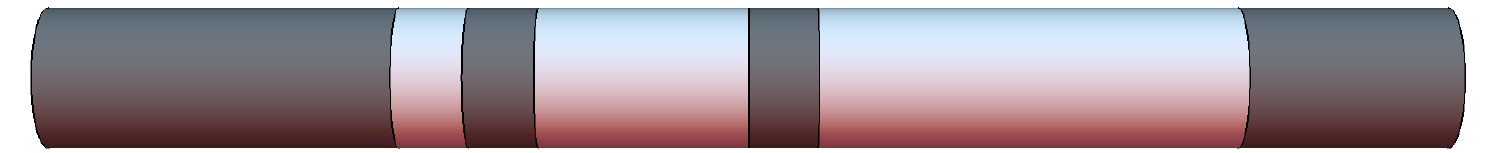}
\caption{}\label{fig:fig_c}
\end{subfigure}
\begin{subfigure}[t]{.4\textwidth}
\centering
\vspace{0pt}
\includegraphics[width=\linewidth]{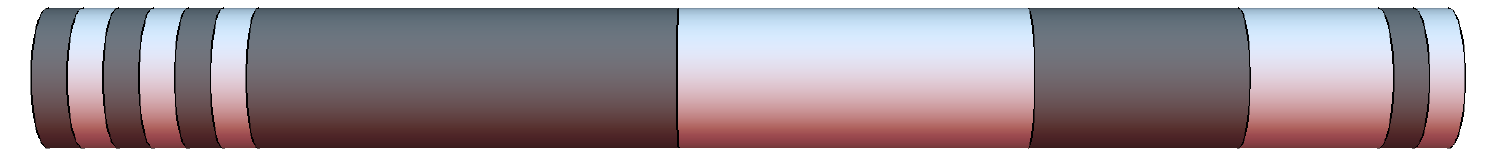}
\caption{}\label{fig:fig_c}
\end{subfigure}
\begin{minipage}[t]{1\textwidth}
\caption{Four possible heat source arrangements for Problem \ref{Prob:1dRod}.}
\end{minipage}

\end{figure}

The solution to Problem \ref{Prob:1dRod} follows from a celebrated result in symmetrization known as Talenti's Theorem \cite{Talenti}. To understand Talenti's solution\footnote{Although Talenti's work in \cite{Talenti} explicitly assumes that the dimension $n\geq2$, the result still holds in dimension $1$. For a different approach to comparison theorems that yields the same result in all dimensions, see Corollary 3 of \cite{Baernstein Cortona Volume} or Theorem 10.10 of \cite{Barenstein Star Function in Complex Analysis} and Corollary \ref{Cor:DComp} below.}, we write out the mathematical formulation of Problem \ref{Prob:1dRod}. Suppose the rod is located along the interval $\left[-\frac{\ell}{2},\frac{\ell}{2}\right]$ and let $E\subseteq [-\frac{\ell}{2},\frac{\ell}{2}]$ denote the locations of the heat sources. Then the steady-state temperature function $u$ satisfies the one-dimensional Poisson problem
\begin{equation}\label{eq:1duTalenti}
-u''=\chi_E  \quad \textup{in} \quad \left(-\frac{\ell}{2},\frac{\ell}{2}\right), \qquad u\left(-\frac{\ell}{2}\right)=u\left(\frac{\ell}{2}\right)=0,
\end{equation}
where $\chi_E$ denotes the characteristic function of the set $E$. Talenti's Theorem compares the solution $u$ in \eqref{eq:1duTalenti} to the solution $v$ of a problem that has been ``symmetrized.'' Specifically, let $v$ solve the Poisson problem
\begin{equation*}
-v''=\chi_{\left(-\frac{|E|}{2},\frac{|E|}{2}\right)}  \quad \textup{in} \quad \left(-\frac{\ell}{2},\frac{\ell}{2}\right), \qquad v\left(-\frac{\ell}{2}\right)=v\left(\frac{\ell}{2}\right)=0,
\end{equation*}
where $|E|$ denotes the length of $E$; in this case $\chi_{\left(-\frac{|E|}{2},\frac{|E|}{2}\right)}$ is called the \emph{symmetric decreasing rearrangement} of $\chi_E$ (for a precise definition, see Definition \ref{def:decrearr}). Talenti showed that the temperature functions $u$ and $v$ compare through their convex means, that is,
\begin{equation}\label{eq:convuv}
\int_{-\frac{\ell}{2}}^{\frac{\ell}{2}}\phi(u)\,dx\leq \int_{-\frac{\ell}{2}}^{\frac{\ell}{2}}\phi(v)\,dx
\end{equation}
for each convex increasing function $\phi:\mathbb{R} \to \mathbb{R}$. Since $u$ and $v$ are concave functions, they are minimized at the ends of the interval $\left[-\frac{\ell}{2},\frac{\ell}{2}\right]$, where both functions vanish. That is,
\begin{equation}\label{eq:infuv}
\min_{\left[-\frac{\ell}{2},\frac{\ell}{2}\right]} u=\min_{\left[-\frac{\ell}{2},\frac{\ell}{2}\right]} v=0.
\end{equation}
Thus $u$ and $v$ are nonnegative. Taking $\phi(x)=\chi_{[0,\infty)}(x)\cdot x^p$ in \eqref{eq:convuv} gives
\[
\|u\|_{L^p\left[-\frac{\ell}{2},\frac{\ell}{2}\right]}\leq \|v\|_{L^p\left[-\frac{\ell}{2},\frac{\ell}{2}\right]}, \qquad 1\leq p<+\infty,
\]
and sending $p\to +\infty$ shows
\begin{equation}\label{eq:supuv}
\max_{\left[-\frac{\ell}{2},\frac{\ell}{2}\right]} u \leq \max_{\left[-\frac{\ell}{2},\frac{\ell}{2}\right]} v.
\end{equation}
Talenti's Theorem thus says that the hottest temperature in Problem \ref{Prob:1dRod} is maximized when the heat sources are centrally gathered in the middle of the rod as in (A) of Figure 1. Note also that equations \eqref{eq:infuv} and \eqref{eq:supuv} show that 
\[
\underset{\left[-\frac{\ell}{2},\frac{\ell}{2}\right]}{\textup{osc}}\ u \leq \underset{\left[-\frac{\ell}{2},\frac{\ell}{2}\right]}{\textup{osc}}\ v,
\]
where $\textup{osc}=\max - \min$ denotes the oscillation, or temperature gap (the difference between the rod's largest and smallest temperatures). Thus, arrangement (A) not only maximizes the rod's hottest temperature but also its temperature gap.

The present paper is motivated by a simple question: What happens if we consider analogues of Problem \ref{Prob:1dRod} with other boundary conditions? To start, we might consider a situation where the ends of the bar interact with the outside environment. For example, imagine that each end of the bar is submerged in a large bath of fluid with temperature zero.  Newton's law of cooling then says that the heat flux is proportional to the temperature at each end of the rod. This physical setting yields boundary conditions known as Robin boundary conditions. Thus, we ask:

\begin{problem}\label{Prob:1dRodRobin}
With the same setup as Problem \ref{Prob:1dRod} for a rod with Robin boundary conditions, where should we locate the heat sources to maximize the hottest steady-state  temperature?
\end{problem}

We solve Problem \ref{Prob:1dRodRobin} by proving a one-dimensional comparison principle for Robin problems in the spirit of Talenti. The result stated below is normalized so the length of the bar equals $2\pi$, but the the result holds for any interval. Specifically, we prove:

\begin{theorem}[ODE Robin Comparison Principle]\label{Th:RComp}
Let $0\leq f\in L^1[-\pi,\pi]$ and $\alpha>0$. Suppose $u$ and $v$ solve the Poisson problems
\begin{align*}
-u''&=f  \quad \textup{in} \quad (-\pi,\pi), \qquad -u'(-\pi)+\alpha u(-\pi)=u'(\pi)+\alpha u(\pi)=0,\\
-v''&=f^{\#}  \quad \textup{in} \quad (-\pi,\pi), \qquad -v'(-\pi)+\alpha v(-\pi)=v'(\pi)+\alpha v(\pi)=0,
\end{align*}
with $f^{\#}$ the symmetric decreasing rearrangement of $f$. Then
\[
\int_{-\pi}^{\pi}\phi(u)\,dx\leq \int_{-\pi}^{\pi}\phi(v)\,dx
\]
for each increasing convex function $\phi:\mathbb{R}\to \mathbb{R}$. In particular,
\begin{equation}\label{eq:LpRobin}
\|u\|_{L^p[-\pi,\pi]} \leq \|v\|_{L^p[-\pi,\pi]},\qquad 1\leq p\leq +\infty.\\
\end{equation}
\end{theorem}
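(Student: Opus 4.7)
The plan is to reduce the Robin problem to a Dirichlet problem on a longer interval by a linear tail extension. I extend $u$ on $[-\pi - 1/\alpha, -\pi]$ using the slope $u'(-\pi)$ and on $[\pi, \pi + 1/\alpha]$ using the slope $u'(\pi)$; call the extension $\tilde u$. The Robin conditions $u'(-\pi) = \alpha u(-\pi)$ and $u'(\pi) = -\alpha u(\pi)$ guarantee that $\tilde u$ vanishes at the new endpoints $\pm(\pi + 1/\alpha)$, and $\tilde u$ is $C^1$ throughout. Since $u''$ is recovered on $(-\pi,\pi)$ and $\tilde u'' \equiv 0$ on the tails, one has $-\tilde u'' = \tilde f$ distributionally, with $\tilde f$ the extension of $f$ by zero. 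Define $\tilde v$ analogously; a quick check shows that extension by zero commutes with symmetric decreasing rearrangement on the enlarged interval, so $(\tilde f)^{\#} = \widetilde{f^{\#}}$ and $\tilde v$ is the Dirichlet solution on $[-\pi - 1/\alpha, \pi + 1/\alpha]$ with source $(\tilde f)^{\#}$. The one-dimensional Talenti comparison recalled in the introduction then gives
\[
\int_{-\pi-1/\alpha}^{\pi+1/\alpha}\phi(\tilde u)\,dx \leq \int_{-\pi-1/\alpha}^{\pi+1/\alpha}\phi(\tilde v)\,dx
\]
for every convex increasing $\phi$.

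Next I split each integral into the central block on $[-\pi, \pi]$ and the two tails. Since $\tilde u$ and $\tilde v$ are linear on the tails, a single change of variables yields
\[
\int_{-\pi-1/\alpha}^{-\pi}\phi(\tilde u)\,dx = \frac{F(u(-\pi))}{\alpha}, \qquad \int_{\pi}^{\pi+1/\alpha}\phi(\tilde u)\,dx = \frac{F(u(\pi))}{\alpha},
\]
where $F(t) := \tfrac{1}{t}\int_0^t \phi(s)\,ds$ (extended by $F(0)=\phi(0)$); analogously each $\tilde v$-tail contributes $F(v(\pi))/\alpha$, using $v(-\pi)=v(\pi)$. The maximum principle for Robin problems with $\alpha>0$ and $f\ge 0$ ensures $u,v\ge 0$, so all arguments of $F$ are nonnegative. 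The Talenti inequality on the enlarged interval therefore rearranges to
\[
\int_{-\pi}^{\pi}\phi(u)\,dx - \int_{-\pi}^{\pi}\phi(v)\,dx \leq \frac{2F(v(\pi)) - F(u(-\pi)) - F(u(\pi))}{\alpha}.
\]

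Finally, integrating each Poisson equation over $(-\pi,\pi)$ and invoking the Robin conditions gives $\int f = \alpha\bigl(u(-\pi)+u(\pi)\bigr)$ and $\int f^{\#} = 2\alpha v(\pi)$; since rearrangement preserves $L^1$-norms, one obtains the balance relation $u(-\pi) + u(\pi) = 2v(\pi)$. It thus suffices to prove the midpoint-convex inequality $F(u(-\pi)) + F(u(\pi)) \geq 2F(\tfrac{u(-\pi)+u(\pi)}{2})$, at which point the right-hand side above is $\leq 0$ and the theorem follows. This last inequality is transparent from the representation $F(t) = \int_0^1 \phi(ts)\,ds$: the convexity of $\phi$ yields $\phi(as) + \phi(bs) \geq 2\phi(\tfrac{(a+b)s}{2})$ pointwise in $s \in [0,1]$, and integrating in $s$ recovers midpoint convexity of $F$. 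The $L^p$ bound \eqref{eq:LpRobin} then follows by taking $\phi(x) = x_+^p$ for $1 \leq p < \infty$ and sending $p \to \infty$.

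The principal obstacle is conceptual: finding the linear tail extension that converts the Robin problem into a Dirichlet problem with a zero-extended source, together with verifying that this extension-by-zero intertwines with symmetric decreasing rearrangement on the enlarged interval. Once these two ingredients are in place, the argument is essentially bookkeeping; the only nontrivial remaining step is the midpoint convexity of $F$, which the substitution $F(t) = \int_0^1 \phi(ts)\,ds$ dispatches in one line.
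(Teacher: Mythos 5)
Your argument is correct, and it takes a genuinely different route from the paper's proof. The paper works directly with the explicit Robin Green's function $G(x,y)=-\tfrac12 c_\alpha xy-\tfrac12|x-y|+\tfrac{1}{2c_\alpha}$, splits it into pieces matched to the Hardy--Littlewood, Riesz--Sobolev, and Baernstein convolution inequalities, and thereby establishes the star-function inequality $u^{\bigstar}\le v^{\bigstar}$ before invoking majorization. You instead convert the Robin problem into a Dirichlet problem on the enlarged interval $[-\pi-1/\alpha,\pi+1/\alpha]$ via the linear tail extension (the Robin conditions $u'(-\pi)=\alpha u(-\pi)$ and $u'(\pi)=-\alpha u(\pi)$ are exactly what make the extension vanish at the new endpoints), note that extension by zero of a nonnegative source commutes with symmetric decreasing rearrangement, and import the one-dimensional Dirichlet comparison. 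The bookkeeping is all sound: each tail contributes $F(u(\pm\pi))/\alpha$ with $F(t)=\int_0^1\phi(ts)\,ds$, the flux balance $u(-\pi)+u(\pi)=2v(\pi)$ follows from integrating the equations and the invariance of the $L^1$-norm under rearrangement, $v(-\pi)=v(\pi)$ follows from uniqueness and the evenness of $f^{\#}$, and the convexity of $F$ closes the estimate. Two remarks. First, to avoid circularity you must take the one-dimensional Dirichlet Talenti theorem as an external input valid for $L^1$ data (e.g.\ Corollary 3 of \cite{Baernstein Cortona Volume}), not as Corollary \ref{Cor:DComp} of this paper, which the authors deduce \emph{from} the theorem you are proving; the footnote in the introduction confirms such independent references exist. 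Second, as to what each approach buys: yours is shorter and conceptually transparent, but it outsources the entire rearrangement content to the Dirichlet case and exploits the fact that the same $\alpha$ appears at both endpoints (so the enlarged interval remains symmetric about the origin); the paper's Green's-function argument is self-contained, yields the quantitatively sharper intermediate inequality $u^{\bigstar}(t)\le\int_{-t/2}^{t/2}v\,dx$, and serves as the engine from which the Dirichlet result is then recovered in the limit $\alpha\to+\infty$.
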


To resolve Problem \ref{Prob:1dRodRobin}, let $E\subseteq [-\pi,\pi]$ denote the locations of the heat sources and let $u$ and $v$ denote the corresponding temperature functions for the Robin problems of Theorem \ref{Th:RComp}:
\begin{align*}
-u''&=\chi_E \quad \textup{in} \quad (-\pi,\pi), \qquad -u'(-\pi)+\alpha u(-\pi)=u'(\pi)+\alpha u(\pi)=0,\\
-v''&= \chi_{\left(-\frac{\pi}{2},\frac{\pi}{2}\right)} \quad \textup{in} \quad (-\pi,\pi), \qquad -v'(-\pi)+\alpha v(-\pi)=v'(\pi)+\alpha v(\pi)=0.
\end{align*}
The proof of Theorem \ref{Th:RComp} shows that $u$ and $v$ are nonnegative, thus taking $p=+\infty$ in \eqref{eq:LpRobin} shows
\[
\max_{\left[-\pi,\pi\right]} u \leq \max_{\left[-\pi,\pi\right]} v.
\]
Thus, as in Problem \ref{Prob:1dRod}, Problem \ref{Prob:1dRodRobin} is resolved with an arrangement of heat sources analogous to (A) in Figure 1.  In fact, in Corollary \ref{Cor:DComp}, we prove that the corresponding Dirichlet result follows from Theorem \ref{Th:RComp}.  Unlike the Dirichlet setting, however, the temperature gap does not necessarily increase under symmetric decreasing rearrangement; see Example \ref{ex:osc} and Proposition \ref{prop:robingapint}.

We also address the situation where the ends of the rod are perfectly insulated. In this setting, we cannot consider a verbatim analogue of Problem \ref{Prob:1dRod}, since perfect insulation requires the presence of both heat sinks and sources. The temperature function, moreover, is unique only up to an additive constant. Thus, we ask:

\begin{problem}\label{Prob:1dRodNeumann}
Suppose half of a given rod is heated uniformly, while on the complimentary half, heat is absorbed uniformly. If the rod's ends are perfectly insulated, where should we place the heat sources and sinks to maximize the hottest steady-state temperature across the rod, assuming the temperature has zero mean?
\end{problem}

Again, we solve Problem \ref{Prob:1dRodNeumann} by proving a Talenti-style comparison principle. We normalize and assume the rod has length $\pi$, but as before, the result holds for any interval. We prove:

\begin{theorem}[ODE Neumann Comparison Principle]\label{Th:NComp}
Let $f\in L^1[0,\pi]$ have zero mean and suppose $u$ and $v$ solve the Poisson problems
\begin{align*}
-u''&=f  \quad \textup{in} \quad (0,\pi), \qquad u'(0)=u'(\pi)=0,\\
-v''&=f^{\ast}  \quad \textup{in} \quad (0,\pi), \qquad v'(0)=v'(\pi)=0,
\end{align*}
with $f^{\ast}$ the decreasing rearrangement of $f$. If $u$ and $v$ both have zero mean, then
\[
\int_{0}^{\pi}\phi(u)\,dx\leq \int_{0}^{\pi}\phi(v)\,dx
\]
for each convex function $\phi:\mathbb{R}\to \mathbb{R}$. In particular,
\begin{align*}
\|u\|_{L^p[0,\pi]} &\leq \|v\|_{L^p[0,\pi]},\qquad 1\leq p\leq +\infty,\\
\max_{[0,\pi]}u \leq \max_{[0,\pi]}v,\qquad \min_{[0,\pi]}v  &\leq \min_{[0,\pi]}u, \qquad \underset{[0,\pi]}{\textup{osc}} \ u \leq \underset{[0,\pi]}{\textup{osc}} \ v.
\end{align*}
\end{theorem}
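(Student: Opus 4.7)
The strategy is to reduce the convex-function comparison, via the Hardy--Littlewood--P\'olya majorization theorem, to the scalar inequality $U(s) \le V(s)$ on $[0, \pi]$ for $U(s) := \int_0^s u^{\ast}\, dx$ and $V(s) := \int_0^s v\, dx$, and then to prove this by a second-order ODE comparison in the spirit of Talenti.

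First I would observe that $v$ is itself nonincreasing on $[0, \pi]$: since $f^{\ast}$ is nonincreasing, $v''$ is nondecreasing, so $v'$ is convex; together with $v'(0) = v'(\pi) = 0$ this forces $v' \le 0$, whence $v = v^{\ast}$. The Hardy--Littlewood--P\'olya theorem then translates the desired inequality for every convex $\phi$ into $U(s) \le V(s)$ for all $s \in [0, \pi]$, given the shared mean $\int u = \int v = 0$. Both $U$ and $V$ vanish at $s = 0$ trivially and at $s = \pi$ by zero mean, so it suffices to show $W := V - U$ is concave on $[0, \pi]$, since a concave function vanishing at both endpoints is automatically nonnegative.

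The heart of the argument is the differential inequality $U''(s) \ge V''(s)$ for a.e.\ $s$. After a standard approximation reducing to smooth $f$, I would fix a regular value $t \in (\min u, \max u)$ and decompose $\Omega_t = \{u > t\}$ into its open interval components in $[0, \pi]$. Let $k'(t)$ count the boundary points of $\Omega_t$ lying in the open interval $(0, \pi)$; since $\Omega_t$ is a nonempty proper subset of $[0, \pi]$, one has $k'(t) \ge 1$. The Neumann conditions $u'(0) = u'(\pi) = 0$ ensure that the endpoints of $[0, \pi]$ drop out of the two key identities
\[
\int_{\Omega_t} f\, dx = \sum_{\partial \Omega_t \cap (0,\pi)} |u'|, \qquad -\mu'(t) = \sum_{\partial \Omega_t \cap (0,\pi)} \frac{1}{|u'|},
\]
obtained by integrating $-u'' = f$ over $\Omega_t$ and applying the one-dimensional co-area formula (with $\mu(t) := |\Omega_t|$). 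Cauchy--Schwarz on these sums yields $k'(t)^2 \le \bigl(\int_{\Omega_t} f\, dx\bigr)(-\mu'(t))$. Setting $s = \mu(t)$, so that $(u^{\ast})'(s) = 1/\mu'(t)$, and using the Hardy--Littlewood inequality $\int_{\Omega_t} f\, dx \le \int_0^s f^{\ast}\, d\xi = -v'(s)$, this rearranges to $-(u^{\ast})'(s) \le -v'(s)$, which is exactly $U''(s) \ge V''(s)$. Hence $W'' \le 0$ a.e., so $W$ is concave and vanishes at both endpoints, giving $W \ge 0$ as required.

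The main obstacle is the possibility that $\Omega_t$ contains an endpoint of $[0, \pi]$: in the classical Dirichlet (Talenti) setup the boundary condition $u = 0$ forces level sets to stay strictly interior, but here this is routine. The Neumann condition $u'(0) = u'(\pi) = 0$ is precisely what makes such endpoints contribute nothing to either the flux identity for $\int_{\Omega_t} f\, dx$ or to the co-area sum, so that Cauchy--Schwarz is applied cleanly only to the interior level-set crossings and the index $k'(t) \ge 1$ is guaranteed. A secondary technicality, the $L^1$-only regularity of $f$, is handled by standard approximation by smooth data.
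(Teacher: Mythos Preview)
Your argument is correct, and it takes a genuinely different route from the paper's proof. The paper does not use the Talenti level-set machinery at all. Instead, it extends $f$ from $[0,\pi]$ to $[-\pi,\pi]$ by even reflection, observes that the reflected solutions are given by convolution with the explicit periodic kernel $K(x)=\tfrac12 x^2-\pi|x|+\tfrac13\pi^2$, and then applies Baernstein's convolution-rearrangement inequality on the circle (the paper's Theorem~2.9) to obtain $\int_E K\ast \tilde f \le \int_{E^{\#}} K\ast (\tilde f)^{\#}$ for every measurable $E$. Restricting back to $[0,\pi]$ turns this into $u^{\bigstar}\le v^{\bigstar}$, and majorization finishes.

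Your approach, by contrast, stays on $[0,\pi]$ and adapts the classical Talenti template: integrate $-u''=f$ over the superlevel set, use the one-dimensional co-area formula and Cauchy--Schwarz over the interior level-set crossings, and invoke the Hardy--Littlewood bound $\int_{\{u>t\}} f\le \int_0^s f^{\ast}=-v'(s)$. The crucial observation you make---that the Neumann condition $u'(0)=u'(\pi)=0$ kills any endpoint contribution to both the flux identity and the co-area sum, so that the count $k'(t)\ge 1$ of interior crossings is what enters Cauchy--Schwarz---is exactly the right substitute for the Dirichlet condition in Talenti's original argument. This yields $(u^{\ast})'\ge v'$ a.e., hence $W=V-U$ is concave with $W(0)=W(\pi)=0$, so $W\ge 0$.

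What each approach buys: the paper's route is shorter once Baernstein's inequality is available, and it links the Neumann problem to the same circular-rearrangement framework used for the Robin result (Theorem~1.2) and for the higher-dimensional shell problems cited from \cite{Langford4}. Your route is more self-contained---it needs only the elementary Hardy--Littlewood inequality rather than the deeper Baernstein convolution theorem---and it makes completely transparent where the Neumann boundary condition enters. The one place to be a bit careful is the passage from $(u^{\ast})'\ge v'$ a.e.\ to concavity of $W$: you need $u^{\ast}$ absolutely continuous so that the pointwise inequality integrates. For generic smooth $f$ this is immediate (finitely many critical points, $u^{\ast}$ piecewise $C^1$), and your stated approximation step handles the general $L^1$ case since the star function is $L^1$-continuous.
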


To resolve Problem \ref{Prob:1dRodNeumann}, let $E \subseteq [0,\pi]$ denote the locations of the heat sources and let $u$ and $v$ denote the temperature functions for the Neumann problems of Theorem \ref{Th:NComp}:
\begin{align*}
-u''&=\chi_E-\chi_{[0,\pi]\setminus E}  \quad \textup{in} \quad (0,\pi), \qquad u'(0)=u'(\pi)=0,\\
-v''&=\chi_{\left[0,\frac{\pi}{2}\right)}-\chi_{\left[\frac{\pi}{2},\pi\right]}  \quad \textup{in} \quad (0,\pi), \qquad v'(0)=v'(\pi)=0.
\end{align*}
According to Theorem \ref{Th:NComp}, the maximum temperature and temperature gap increase, and the minimal temperature decreases under decreasing rearrangement:
\[
\max_{[0,\pi]}u \leq \max_{[0,\pi]}v,\qquad \min_{[0,\pi]}v  \leq \min_{[0,\pi]}u, \qquad \underset{[0,\pi]}{\textup{osc}} \ u \leq \underset{[0,\pi]}{\textup{osc}} \ v.
\]
Thus Problem \ref{Prob:1dRodNeumann} is resolved by choosing an arrangement of sources and sinks analogous to (B) in Figure 1, only here, white areas represent heat sources and gray areas represent heat sinks.

Taken in sum, Theorems \ref{Th:RComp} and \ref{Th:NComp} reveal a striking difference in the behavior of source functions that induce large temperature functions (interpreted in the sense of convex means). With the Neumann problem, one takes full advantage of the insulated ends, sweeping the greatest sources to one end of the bar and greatest sinks to the opposite end. However, the instant any heat energy is allowed to escape through the bar's ends and the Robin regime is entered, the arrangement switches and we instead move the greatest sources towards the middle of the bar and push the weakest sources out towards the ends.

The results of our paper are examples of comparison theorems for differential equations. To place our work in the existing literature, we recall that the first major comparison result, as mentioned above, is due to Talenti \cite{Talenti}, who compared the solutions of two Poisson problems with Dirichlet boundary conditions and nonnegative source, $f$, namely
\[
\begin{array}{rclccccrclcc}
-\Delta u & = & f & \text{in} & \Omega, &  &  & -\Delta v & = & f^{\#} & \text{in} & \Omega^{\#},\\
u & = & 0 & \text{on} & \partial \Omega, &  &  &v & = & 0 & \text{on} & \partial \Omega^{\#}.
\end{array}
\]
Here, $\Omega \subseteq \mathbb{R}^n$ is a bounded Lipschitz domain with $n\geq 2$, $0\leq f\in L^2(\Omega)$, $\Omega^{\#} \subseteq \mathbb{R}^n$ is the open ball centered at $0$ with the same volume as $\Omega$, and $f^{\#}$ denotes the symmetric decreasing rearrangement of $f$, a radially decreasing function on $\Omega^{\#}$ whose upper level sets have the same volume as those of $f$, meaning $|\{x\in \Omega:f(x)>t\}|=|\{x\in \Omega^{\#}:f^{\#}(x)>t\}|$ for $t\in \mathbb{R}$. Talenti showed that the solutions $u$ and $v$ compare via their symmetric decreasing rearrangements through the inequality
\[
u^{\#} \leq v \quad \textup{in }\Omega^{\#}.
\]

 
The history of comparison phenomena that followed Talenti's original work is long and the results are the subject of many articles. Fortunately, Talenti has prepared a thorough survey of the material (up to 2016). We direct the reader interested in this important background to  \cite{TalentiSurvey} and the references therein. 

Since the publication of Talenti's survey, authors have begun to turn their attention to comparison principles for Robin problems.  As an example relevant to our work, in \cite{ANT} Alvino, Nitsch, and Trombetti consider the exact same setup addressed by Talenti \cite{Talenti} and mentioned above, but impose Robin boundary conditions rather than Dirichlet boundary conditions. That is, for $\alpha>0$, $\Omega \subseteq \mathbb{R}^n$ a bounded Lipschitz domain with $n\geq 2$, and $0\leq f\in L^2(\Omega)$, they consider the problems
\[
\begin{array}{rclccccrclcc}
-\Delta u & = & f & \text{in} & \Omega, &  &  & -\Delta v & = & f^{\#} & \text{in} & \Omega^{\#},\\
\frac{\partial u}{\partial \nu}+\alpha u & = & 0 & \text{on} & \partial \Omega, &  &  &\frac{\partial v}{\partial \nu}+\alpha v & = & 0 & \text{on} & \partial \Omega^{\#},
\end{array}
\]
with $\frac{\partial}{\partial \nu}$ the outer normal derivative and $\#$ the symmetric decreasing rearrangement. The authors show that Talenti's conclusion $u^{\#}\leq v$ in $\Omega^{\#}$ fails in general, but that $u$ and $v$ compare via their Lorentz norms. In dimension $n=2$, they show that the $L^1$- and $L^2$-norms of $u$ are dominated by those of $v$, and when $f=1$, that $u^{\#} \leq v$ in $\Omega^{\#}$. These results are extended in the subsequent work of Alvino, Chiacchio, Nitsch, and Trombetti \cite{ACNT}. In related work \cite{AGM}, Amato, Gentile, and Masiello, generalize results of \cite{ANT} to a nonlinear setting, replacing the Laplacian with the $p$-Laplace operator.

In addition to the the results of \cite{ACNT}, \cite{ANT}, and \cite{AGM}, in \cite{Langford4} the first author studies Poisson problems of the form
\[
\begin{array}{rclccccrclcc}
-\Delta u & = & f & \text{in} & A, &  &  & -\Delta v & = & f^{\#} & \text{in} & A,\\
\frac{\partial u}{\partial \nu}+\alpha u & = & 0 & \text{on} & \partial A, &  &  &\frac{\partial v}{\partial \nu}+\alpha v & = & 0 & \text{on} & \partial A,
\end{array}
\]
where $A\subseteq \mathbb{R}^n$ is a spherical shell (the region between two concentric spheres), $\alpha>0$, $f\in L^2(A)$ and $\#$ is the {\it cap symmetrization}. (To cap symmetrize a function $f:A\to \mathbb{R}$, one applies the spherical rearrangement (the analogue of the symmetric decreasing rearrangement on the sphere) to each of $f$'s radial slice functions). The author shows that the solutions $u$ and $v$ compare through their convex means:
\[
\int_{A}\phi(u)\,dx\leq \int_{A}\phi(v)\,dx
\]
for each convex function $\phi:\mathbb{R} \to \mathbb{R}$. The author obtains similar results for $\alpha=0$ (the Neumann problem), assuming $f$, $u$, and $v$ all have zero mean. (For related work on the Neumann problem, see \cite{Langford1}, \cite{Langford2}, and \cite{Langford3}).

To the best of our knowledge, references \cite{ACNT}, \cite{ANT}, \cite{AGM}, and \cite{Langford4} comprise all that has appeared in print to addresses Robin comparison principles for differential equations in the spirit of Talenti. Thus, our work adds an interesting contribution to this new direction in the study of comparison principles.

The rest of this note is organized as follows. In Section 2 we discuss existence and uniqueness results for the Poisson problems of Theorems \ref{Th:RComp} and \ref{Th:NComp}, so that our paper may be self-contained. We then discuss Robin Green's functions and relevant rearrangement inequalities needed to prove Theorems \ref{Th:RComp} and \ref{Th:NComp}. In Section 3, we prove our paper's main results.

\section{Background}

Since the goal of our paper is to compare the solutions of one-dimensional Poisson problems with Robin and Neumann boundary conditions, we begin with two existence and uniqueness results. These results are stated on the interval $[-\pi,\pi]$ for convenience, but they hold for any interval.

\begin{proposition}[Robin Existence and Uniqueness]\label{Prop:Runiq}
Let $f\in L^1[-\pi,\pi]$ and $\alpha>0$. A unique $u\in C^1[-\pi,\pi]$ exists satisfying
\begin{itemize}
\item[1.] $u'$ is absolutely continuous on $[-\pi,\pi]$.
\item[2.] $-u''=f$ a.e. on $(-\pi,\pi)$.
\item[3.] $-u'(-\pi)+\alpha u(-\pi)=u'(\pi)+\alpha u(\pi)=0$.
\end{itemize}
\end{proposition}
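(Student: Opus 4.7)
The plan is to construct the solution directly by integrating twice and then pinning down the two constants of integration using the two Robin boundary conditions; uniqueness will follow by showing that the homogeneous problem admits only the trivial solution. Since the ODE is linear with an $L^1$ forcing term, the natural candidate is
\[
u(x) = -\int_{-\pi}^{x}(x-t)f(t)\,dt + C_1(x+\pi) + C_2,
\]
for real constants $C_1,C_2$. Because $f\in L^1[-\pi,\pi]$, the first antiderivative $u'(x) = -\int_{-\pi}^{x}f(t)\,dt + C_1$ is absolutely continuous on $[-\pi,\pi]$, so items (1) and (2) of the proposition hold automatically for any choice of $C_1,C_2$. Note in particular that $u\in C^1[-\pi,\pi]$.

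Next I would impose the two boundary conditions on this formula. Evaluating $u$ and $u'$ at $-\pi$ gives $u(-\pi)=C_2$ and $u'(-\pi)=C_1$, so the left endpoint condition $-u'(-\pi)+\alpha u(-\pi)=0$ reads $C_1=\alpha C_2$. Letting $I=\int_{-\pi}^{\pi}f(t)\,dt$ and $J=\int_{-\pi}^{\pi}(\pi-t)f(t)\,dt$, the right endpoint condition $u'(\pi)+\alpha u(\pi)=0$ becomes a second linear equation in $C_1,C_2$. Substituting $C_1=\alpha C_2$ into this equation yields an explicit formula of the form $C_2 = (I+\alpha J)/\bigl(2\alpha(1+\pi\alpha)\bigr)$, which is well-defined precisely because $\alpha>0$. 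This determines $C_1$ and $C_2$ uniquely and produces a bona fide solution.

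For uniqueness, I would observe that the difference $w$ of any two solutions satisfies the homogeneous problem $-w''=0$ with the same Robin conditions, so $w(x)=ax+b$ is affine. Plugging into the two boundary conditions produces the system $\alpha b = a(1+\alpha\pi)$ and $-\alpha b = a(1+\alpha\pi)$; adding yields $2a(1+\alpha\pi)=0$, so $a=0$ (using $\alpha>0$), and then $b=0$. Hence $w\equiv 0$ and the solution is unique.

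The argument is essentially routine once the right representation formula is written down; the only genuine point is the non-degeneracy of the $2\times 2$ boundary-condition system, which is what forces $\alpha>0$ (or more generally $1+\pi\alpha\neq 0$) into the hypothesis. I would expect the writing to be cleaner if I instead recorded the boundary conditions as a single matrix equation and explicitly computed the determinant $2\alpha(1+\pi\alpha)$, making the role of $\alpha>0$ transparent and setting the stage for the Green's function discussion in the next subsection.
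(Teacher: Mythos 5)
Your proposal is correct and follows essentially the same route as the paper: existence by writing $u$ as a double antiderivative of $-f$ plus an affine term whose coefficients are fixed by the two Robin conditions, and uniqueness by reducing the homogeneous problem to an affine function that the boundary conditions force to vanish. The only difference is cosmetic—you compute the constants and the determinant $2\alpha(1+\pi\alpha)$ explicitly, while the paper leaves that step implicit.
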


\begin{proof}
We first establish uniqueness. Suppose $u$ and $v$ both satisfy all the properties listed above, and let $w=u-v$. Since $w'$ is absolutely continuous, for each $x\in[-\pi,\pi]$ we have
\[
w'(x)=w'(x)-w'(-\pi)+\alpha w(-\pi)=\int_{-\pi}^x(-f+f)\,dy+\alpha w(-\pi)=\alpha w(-\pi).
\]
Thus, $w(x)=\alpha w(-\pi)x+b$ for some constant $b$. The equations $-w'(-\pi)+\alpha w(-\pi)=w'(\pi)+\alpha w(\pi)=0$ imply that $\alpha w(-\pi)=b=0$, and so $u\equiv v$. For existence, we simply take
\[
u(x)=-\int_{-\pi}^x\int_{-\pi}^tf(s)\,ds\,dt+cx+d,
\]
where $c$ and $d$ are chosen to make $-u'(-\pi)+\alpha u(-\pi)=u'(\pi)+\alpha u(\pi)=0$.
\end{proof}

We also have an existence and uniqueness result for the Neumann problem. The proof is similar to that of the Robin result.
\begin{proposition}[Neumann Existence and Uniqueness]\label{Prop:Nuniq}
Let $f\in L^1[-\pi,\pi]$ with $\int_{-\pi}^{\pi}f\,dx=0$. A unique $u\in C^1[-\pi,\pi]$ exists satisfying
\begin{itemize}
\item[1.] $u'$ is absolutely continuous on $[-\pi,\pi]$.
\item[2.] $-u''=f$ a.e. on $(-\pi,\pi)$.
\item[3.] $u'(-\pi)=u'(\pi)=0$.
\item[4.] $\int_{-\pi}^{\pi}u\,dx=0$.
\end{itemize}
\end{proposition}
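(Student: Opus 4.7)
The plan is to mirror the Robin proof almost verbatim, making only the changes needed to accommodate the Neumann boundary conditions and the zero-mean normalizations.

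For uniqueness, I would suppose that $u$ and $v$ both satisfy properties 1--4 and set $w=u-v$. Then $w'$ is absolutely continuous with $w''=0$ a.e., so by the fundamental theorem of Lebesgue integral calculus, $w'(x)=w'(-\pi)+\int_{-\pi}^{x}w''(s)\,ds=w'(-\pi)=0$ for every $x\in[-\pi,\pi]$. Hence $w$ is constant on $[-\pi,\pi]$. The zero-mean condition $\int_{-\pi}^{\pi}w\,dx=0$ then forces that constant to be $0$, giving $u\equiv v$.

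For existence, I would imitate the Robin construction and set
\[
u(x)=-\int_{-\pi}^{x}\int_{-\pi}^{t}f(s)\,ds\,dt+cx+d,
\]
with $c,d$ to be determined. A direct differentiation (using that $t\mapsto \int_{-\pi}^{t}f(s)\,ds$ is absolutely continuous because $f\in L^1$) gives $u'(x)=-\int_{-\pi}^{x}f(s)\,ds+c$, whence $u'$ is absolutely continuous and $-u''=f$ a.e. The boundary condition $u'(-\pi)=0$ forces $c=0$, and then the hypothesis $\int_{-\pi}^{\pi}f\,dx=0$ automatically yields $u'(\pi)=-\int_{-\pi}^{\pi}f\,ds=0$, so the second Neumann condition is satisfied for free. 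Finally, $\int_{-\pi}^{\pi}u\,dx$ is an affine function of $d$ with slope $2\pi\neq 0$, so a unique $d$ arranges $\int_{-\pi}^{\pi}u\,dx=0$.

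Conceptually, the only novelty relative to the Robin case is that the two boundary conditions in the Neumann problem are not independent: the second is a compatibility condition forced by the equation via integration of $-u''=f$, which is exactly why the hypothesis $\int_{-\pi}^{\pi}f\,dx=0$ is needed. Since the homogeneous problem has a one-dimensional kernel (the constants), uniqueness is restored by the mean-zero normalization in condition 4. No step here is a genuine obstacle; the main thing to be careful about is invoking absolute continuity of indefinite integrals of $L^1$ functions to justify the a.e.\ identity $-u''=f$ and to rule out any additional solutions beyond the affine family $cx+d$.
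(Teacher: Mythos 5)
Your proof is correct and is exactly the adaptation of the Robin argument that the paper itself intends (the paper omits the proof, saying only that it is similar to the Robin case). You correctly identify the two points where the adaptation matters: the compatibility condition $\int_{-\pi}^{\pi}f\,dx=0$ delivers the second Neumann condition, and the zero-mean normalization kills the constant in the kernel.
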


Thus, the solutions $u$ and $v$ in Theorems \ref{Th:RComp} and \ref{Th:NComp} are guaranteed to exist and be unique. For the Robin problem, we in fact prove a bit more. Namely, we show that solutions are obtained by integration against an explicitly computable Green's function.

\begin{proposition}[Green's Representation]\label{Prop:RGreen}
For $\alpha>0$, the Green's function for the Robin problem on the interval $[-\pi,\pi]$ equals
\[
G(x,y)=-\frac{1}{2}c_{\alpha}xy-\frac{1}{2}|x-y|+\frac{1}{2c_{\alpha}},\qquad x,y\in [-\pi,\pi],
\]
where
\begin{equation}\label{eq:calphadef}
c_{\alpha}=\frac{\alpha}{1+\alpha \pi}.
\end{equation}
That is,
\begin{itemize}
\item[1.] $-G_{xx}(x,y)=\delta_x(y),$ for  $x,y\in (-\pi,\pi),$ 
\item[2.] $-G_x(-\pi,y)+\alpha G(-\pi,y)=G_x(\pi,y)+\alpha G(\pi,y)=0$ for  $y\in (-\pi,\pi)$.
\end{itemize}
Thus, if $f\in L^1[-\pi,\pi]$ and $u$ solves
\[
-u''=f  \quad \textup{in} \quad (-\pi,\pi), \qquad -u'(-\pi)+\alpha u(-\pi)=u'(\pi)+\alpha u(\pi)=0,
\]
then
\[
u(x)=\int_{-\pi}^{\pi}G(x,y)f(y)\,dy, \qquad x\in [-\pi,\pi].
\]
\end{proposition}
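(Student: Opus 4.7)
The plan is to verify the stated formula directly, rather than derive it from scratch, and then invoke Proposition \ref{Prop:Runiq} to handle the representation claim. Write $G(x,y) = H(x,y) - \tfrac{1}{2}|x-y|$ where $H(x,y) = -\tfrac{1}{2}c_\alpha xy + \tfrac{1}{2c_\alpha}$ is smooth and affine in $x$. Then $H_{xx}(x,y)=0$, so the distributional identity reduces to $\partial_x^2\bigl(-\tfrac{1}{2}|x-y|\bigr) = -\delta_x(y)$. This follows because $\partial_x|x-y| = \operatorname{sgn}(x-y)$ away from the diagonal and $\operatorname{sgn}(\cdot)$ has distributional derivative $2\delta$, so $\partial_x^2|x-y| = 2\delta_x(y)$. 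Equivalently, one can note that $G_x(x,y) = -\tfrac{1}{2}c_\alpha y - \tfrac{1}{2}\operatorname{sgn}(x-y)$ for $x\neq y$, and read off the unit jump $G_x(y^+,y)-G_x(y^-,y) = -1$, which is precisely the condition $-G_{xx}=\delta_x(y)$.

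Next verify the Robin boundary conditions, where the main point is bookkeeping with $c_\alpha = \alpha/(1+\alpha\pi)$. For $y\in(-\pi,\pi)$ one has $|-\pi - y| = \pi+y$, so $G(-\pi,y) = \tfrac{1}{2}c_\alpha \pi y - \tfrac{1}{2}(\pi+y) + \tfrac{1}{2c_\alpha}$ and $G_x(-\pi,y) = -\tfrac{1}{2}c_\alpha y + \tfrac{1}{2}$. Substituting into $-G_x(-\pi,y) + \alpha G(-\pi,y)$, the coefficient of $y$ becomes $\tfrac{1}{2}(c_\alpha + \alpha\pi c_\alpha - \alpha)$ which vanishes by the defining relation $c_\alpha(1+\alpha\pi)=\alpha$, and the constant term becomes $-\tfrac{1}{2}(1+\alpha\pi) + \tfrac{\alpha}{2c_\alpha}$, which also vanishes since $\alpha/c_\alpha = 1+\alpha\pi$. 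An identical computation (or the symmetry $G(x,y)=G(-x,-y)$ together with a sign flip in $x$) handles the Robin condition at $x=\pi$.

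Finally, for the representation formula, set $U(x) = \int_{-\pi}^\pi G(x,y) f(y)\,dy$. Using the explicit form $G_x(x,y) = -\tfrac{1}{2}c_\alpha y - \tfrac{1}{2}\operatorname{sgn}(x-y)$, differentiation under the integral shows $U\in C^1[-\pi,\pi]$ with
\[
U'(x) = \int_{-\pi}^\pi \Bigl(-\tfrac{1}{2}c_\alpha y\Bigr)f(y)\,dy - \tfrac{1}{2}\int_{-\pi}^\pi \operatorname{sgn}(x-y) f(y)\,dy.
\]
The first term is constant in $x$, and the second is an absolutely continuous function of $x$ whose a.e.\ derivative equals $-f(x)$, giving $U'' = -f$ a.e. The Robin boundary conditions for $U$ at $x=\pm\pi$ follow by linearity from those already verified for $G(\cdot,y)$. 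By the uniqueness half of Proposition \ref{Prop:Runiq}, $U=u$.

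The main obstacle, such as it is, lies in Step 2: the algebraic cancellation at the boundary is the only place where the precise value of $c_\alpha$ is used, and it is the only calculation that would fail if the constants in $G$ were slightly off. Steps 1 and 3 are structural and follow immediately from the decomposition $G = H - \tfrac{1}{2}|x-y|$ together with the standard distributional identities for $|x-y|$ and $\operatorname{sgn}(x-y)$.
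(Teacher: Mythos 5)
Your proof is correct and follows essentially the same route as the paper: verify properties 1 and 2 by direct computation (which the paper leaves as ``a straightforward calculation''), then define $U(x)=\int_{-\pi}^{\pi}G(x,y)f(y)\,dy$, differentiate under the integral to check the three conditions of Proposition \ref{Prop:Runiq}, and conclude by uniqueness. The only difference is that you supply the boundary-condition algebra explicitly, which the paper omits.
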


\begin{proof}
Properties 1 and 2 follow from a straightforward calculation. Define
\begin{equation}\label{eq:wdefGreen}
w(x)=\int_{-\pi}^{\pi}G(x,y)f(y)\,dy.
\end{equation}
We show that $w\in C^1[-\pi,\pi]$ and that $w$ satisfies all three properties of Proposition \ref{Prop:Runiq}.

The Dominated Convergence Theorem gives
\begin{align}
w'(x)&=\int_{-\pi}^{\pi}G_x(x,y)f(y)\,dy \label{eq:wpdefGreen}\\
&=\int_{-\pi}^x\left(-\frac{1}{2}c_{\alpha}y-\frac{1}{2}\right)f(y)\,dy + \int_{x}^{\pi}\left(-\frac{1}{2}c_{\alpha}y+\frac{1}{2}\right)f(y)\,dy,\nonumber
\end{align}
and this representation shows that $w'$ is absolutely continuous on $[-\pi,\pi]$ with $-w''=f$ a.e. Formulas \eqref{eq:wdefGreen} and \eqref{eq:wpdefGreen} for $w$ and $w'$ together with property 2 of the present proposition give $-w'(-\pi)+\alpha w(-\pi)=w'(\pi)+\alpha w(\pi)=0$. The result now follows from uniqueness in Proposition \ref{Prop:Runiq}.

\end{proof}

To prove our main results, we will also need several tools from symmetrization. We start with the decreasing and symmetric decreasing rearrangements.

\begin{definition}
[Decreasing and Symmetric Decreasing Rearrangements]\label{def:decrearr}Suppose $X\subseteq \mathbb{R}$ is a measurable set and $f\in L^{1}(X)$ satisfies the finiteness condition
\[
|\{x\in X:f(x)>t\}|<\infty,\qquad t>\underset{X}{\textup{ess\ inf}} \ f.
\]

Define $f^{\ast}:[0,|X|]\rightarrow[-\infty,+\infty]$
via
\[
f^{\ast}(t)=\begin{cases}
\underset{X}{\textup{ess\ sup}}\ f & \textup{if}\ t=0,\\
\inf\{s:|\{x:s<f(x)\}|\leq t\} & \textup{if}\ t\in(0,|X|),\\
\underset{X}{\textup{ess\ inf}}\ f & \textup{if}\ t=|X|.
\end{cases}
\]
We call $f^{\ast}$ the decreasing rearrangement of $f$. The symmetric decreasing rearrangement of $f$ is the function $f^{\#}:\left[-\frac{1}{2}|X|,\frac{1}{2}|X|\right]\rightarrow[-\infty,+\infty]$ defined by $f^{\#}(t)=f^{\ast}(2|t|)$.
\end{definition}

We next define the notion of a star function, first introduced by Baernstein to solve extremal problems in complex analysis. (For more on star functions and their use in analysis, see \cite{Baernstein Edrei's Spread Conjecture}, \cite{Baernstein Integral means}, \cite{Baernstein how the star function}, \cite{Barenstein Star Function in Complex Analysis}).

\begin{definition}[Star Function]\label{Def:StarFunction} Let $f\in L^{1}(X)$, where $X\subseteq \mathbb{R}$ is a measurable set of finite length. We define the star function of $f$ on the interval $[0,|X|]$ by the formula
\[
f^{\bigstar}(t) =  \underset{|E|=t}{\sup}\ {\displaystyle \int_{E}f\,dx},
\]
where the $\sup$ is taken over all measurable subsets $E\subseteq X$ with $|E|=t$.
\end{definition}
Our next proposition establishes a key connection between the star function and the decreasing rearrangement. 
\begin{proposition}\label{prop:genstarprop}
\label{prop:Star function achieved}Assume $f\in L^{1}(X)$ with $X\subseteq \mathbb{R}$ a measurable subset of finite length. Then for each $t\in[0,|X|]$,
\[
f^{\bigstar}(t)=\int_0^tf^{\ast}(s)\,ds,
\]
where $f^{\ast}$ is the decreasing rearrangement of $f$.
\end{proposition}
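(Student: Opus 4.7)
The plan is to prove the equality by establishing both inequalities separately: first an upper bound $\int_E f\,dx\leq\int_0^t f^*(s)\,ds$ for every measurable $E\subseteq X$ with $|E|=t$, and then to exhibit a specific $E$ attaining the bound, which together give $f^\bigstar(t)=\int_0^t f^*(s)\,ds$.

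For the upper bound, I would apply the layer-cake identity
\[
\int_E f\,dx=\int_0^{\infty}|\{f>s\}\cap E|\,ds-\int_{-\infty}^{0}|\{f\leq s\}\cap E|\,ds,
\]
together with the elementary set-theoretic bounds
\[
|\{f>s\}\cap E|\leq\min(\mu_f(s),t)\qquad\text{and}\qquad|\{f\leq s\}\cap E|\geq\max(0,t-\mu_f(s)),
\]
where $\mu_f(s)=|\{x\in X:f(x)>s\}|$. The same layer-cake identity applied to $f^*$ on $[0,t]$ shows
\[
\int_0^t f^*(s)\,ds=\int_0^{\infty}\min(\mu_f(s),t)\,ds-\int_{-\infty}^{0}\max(0,t-\mu_f(s))\,ds,
\]
using that Definition \ref{def:decrearr} identifies $\{s\in[0,t]:f^*(s)>r\}$ with an interval of measure $\min(\mu_f(r),t)$ for $r\geq 0$, and $\{s\in[0,t]:f^*(s)\leq r\}$ with one of measure $\max(0,t-\mu_f(r))$ for $r<0$. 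Comparing the two displays gives the desired inequality, hence $f^\bigstar(t)\leq\int_0^t f^*(s)\,ds$.

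For attainment, I would set $s^*=f^*(t)$, observe that $|\{f>s^*\}|\leq t\leq|\{f\geq s^*\}|$ follows directly from Definition \ref{def:decrearr}, and take $E=A\cup B$, where $A=\{f>s^*\}$ and $B$ is any measurable subset of $\{f=s^*\}$ with $|B|=t-|A|$. Then both elementary bounds above hold with equality for every $s$, so the layer-cake computation evaluates $\int_E f\,dx$ to exactly $\int_0^t f^*(s)\,ds$, proving $f^\bigstar(t)\geq\int_0^t f^*(s)\,ds$. The main obstacle is the bookkeeping when the distribution of $f$ has an atom at $s^*$ so that $\{f=s^*\}$ has positive measure: the subset $B$ must be chosen carefully so that $|E|=t$ precisely, but since $\{f=s^*\}$ is a measurable subset of $\R$, it admits measurable subsets of any prescribed measure, which makes the selection possible and closes the proof.
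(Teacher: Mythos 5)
Your argument is correct. Note that the paper does not prove this proposition at all---it cites Proposition 9.2 of Baernstein's \emph{Symmetrization in Analysis}---so there is no internal proof to compare against; the standard argument in that reference runs through the Hardy--Littlewood inequality ($\int_E f\,dx=\int \chi_E f\,dx\le\int\chi_E^{\ast}f^{\ast}\,ds=\int_0^t f^{\ast}\,ds$) plus attainment on a superlevel set, whereas you in effect reprove that inequality from scratch via the layer-cake decomposition. The two key facts you invoke without detailed proof are both sound and standard: the sandwich $|\{f>s^{\ast}\}|\le t\le|\{f\ge s^{\ast}\}|$ follows from the right-continuity and monotonicity of the distribution function $\mu_f$, and the existence of $B\subseteq\{f=s^{\ast}\}$ with $|B|=t-|A|$ follows from the continuity of $c\mapsto|\{f=s^{\ast}\}\cap(-\infty,c]|$. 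One cosmetic point: in the layer-cake identity the negative part is naturally $\int_{-\infty}^{0}|\{f<s\}\cap E|\,ds$ rather than $\{f\le s\}$, but the two integrands differ only at the (countably many) atoms of the distribution of $f$, hence agree for a.e.\ $s$, so your version is harmless. The verification that your extremal set $E=A\cup B$ turns both elementary bounds into equalities for every $s$ (using $\mu_f(s)\le t$ for $s\ge s^{\ast}$ and $\mu_f(s)>t$ for $s<s^{\ast}$) is exactly right, so the proof is complete and self-contained.
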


For a proof of Proposition \ref{prop:genstarprop}, see Proposition 9.2 of \cite{Baernstein Symmetrization in Analysis}.

Our proofs of Theorems \ref{Th:RComp} and \ref{Th:NComp} will show that the solutions $u$ and $v$ satisfy the star function inequality $u^{\bigstar}\leq v^{\bigstar}$. Our next result recasts this inequality into an equivalent inequality about convex means. 

\begin{proposition}
[Majorization]\label{Prop:Majorization}
Assume $X\subseteq \mathbb{R}$ is a measurable subset of finite length and $u,v\in L^{1}(X)$. Then
\[
u^{\bigstar}  \leq  v^{\bigstar}
\]
on $[0,|X|]$ if and only if
\[
\int_{X}\phi(u)\,dx  \leq  \int_{X}\phi(v)\,dx
\]
for every increasing convex function $\phi:\mathbb{R}\rightarrow\mathbb{R}$. If $\int_{X}u\,dx=\int_{X}v\, dx$, then the word ``increasing'' may be removed from the previous statement.
\end{proposition}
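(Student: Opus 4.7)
The plan is to use Proposition \ref{prop:genstarprop} together with equimeasurability to reduce the statement to an inequality about the nonincreasing rearrangements $u^*, v^*$ and their cumulative integrals $U(t) := u^\bigstar(t)$ and $V(t) := v^\bigstar(t)$ on $[0,|X|]$. Equimeasurability gives $\int_X \phi(f)\,dx = \int_0^{|X|} \phi(f^*(s))\,ds$, so the inequality $\int_X \phi(u)\,dx \leq \int_X \phi(v)\,dx$ is equivalent to $\int_0^{|X|} \phi(u^*)\,ds \leq \int_0^{|X|} \phi(v^*)\,ds$. I would prove the forward implication via a convexity and Abel-summation argument, and the converse via Legendre duality applied to the test functions $\phi_s(x) := (x-s)^+$.

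For the forward direction, assume $U \leq V$ on $[0,|X|]$ and let $\phi$ be increasing and convex. The subgradient inequality applied with the nondecreasing right derivative $\phi'_+$ gives
\[
\phi(u^*(s)) - \phi(v^*(s)) \leq \phi'_+(u^*(s))\bigl(u^*(s) - v^*(s)\bigr) = c(s)\, A'(s),
\]
where $c(s) := \phi'_+(u^*(s))$ and $A(s) := U(s) - V(s)$. Then $c$ is nonnegative (because $\phi$ is increasing) and nonincreasing in $s$ (because $u^*$ is nonincreasing while $\phi'_+$ is nondecreasing), while $A(0) = 0$ and $A \leq 0$ on $[0,|X|]$ by hypothesis. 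Riemann–Stieltjes integration by parts yields
\[
\int_0^{|X|} c\, dA = c(|X|)\, A(|X|) - \int_0^{|X|} A\, dc,
\]
and both terms are nonpositive: the boundary term because $c(|X|) \geq 0$ and $A(|X|) \leq 0$, and the integral term because $dc$ is a nonpositive Stieltjes measure while $A \leq 0$. This establishes $\int_0^{|X|}[\phi(u^*) - \phi(v^*)]\,ds \leq 0$. In the equal-means case $\int_X u\,dx = \int_X v\,dx$, one has $A(|X|) = 0$, so the boundary term vanishes automatically; the assumption that $\phi$ be increasing (used only to force $c \geq 0$) can then be dropped.

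For the converse, apply the hypothesis to each $\phi_s$, $s \in \mathbb{R}$, to obtain $\int_X (u-s)^+\,dx \leq \int_X (v-s)^+\,dx$. A direct computation using equimeasurability together with the fact that $\tfrac{d}{dt}(f^\bigstar(t) - st) = f^*(t) - s$ gives the identity
\[
\int_X (f-s)^+\,dx = \sup_{t \in [0,|X|]} \bigl(f^\bigstar(t) - st\bigr),
\]
with the supremum attained at $t = |\{f > s\}|$. The hypothesis thus reads $\sup_t(U(t) - st) \leq \sup_t(V(t) - st)$ for every $s$. Since $u^*, v^*$ are nonincreasing and integrable, $U$ and $V$ are concave and continuous on $[0,|X|]$ with $U(0) = V(0) = 0$; extending by $-\infty$ outside $[0,|X|]$ makes them proper, concave, upper-semicontinuous functions on $\mathbb{R}$. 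The Fenchel–Moreau theorem then yields the inversion
\[
U(t) = \inf_{s \in \mathbb{R}} \Bigl( \sup_{\tau}\bigl(U(\tau) - s\tau\bigr) + st \Bigr),
\]
and similarly for $V$. Since the expression inside each infimum is pointwise smaller (in $s$) for $U$ than for $V$, taking infima yields $U \leq V$ on $[0,|X|]$, as desired.

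The main technical obstacle lies in the forward direction when $u$, and therefore $c = \phi'_+(u^*)$, may blow up near $s = 0$: one must interpret the Stieltjes integrals $\int c\,dA$ and $\int A\,dc$ correctly and justify the integration by parts under only the standing hypotheses $u, v \in L^1(X)$. This can be handled by first truncating $u$ and $v$ at height $n$, applying the bounded argument to $u \wedge n$ and $v \wedge n$ (whose star functions still satisfy the relevant inequality), and passing to the limit by monotone convergence. Everything else amounts to standard Abel summation and Legendre duality.
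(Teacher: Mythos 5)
Your converse direction is sound and is essentially the standard route (the paper itself gives no proof, deferring to Propositions 10.1 and 10.3 of Baernstein's book): test against the angle functions $(x-s)^{+}$, identify $\int_{X}(f-s)^{+}\,dx$ with the concave conjugate $\sup_{t}\bigl(f^{\bigstar}(t)-st\bigr)$, and invert by Fenchel--Moreau. The forward direction via the subgradient bound and Abel summation is also correct whenever $c=\phi'_{+}\circ u^{\ast}$ is bounded, and your observation that equal means kill the boundary term, allowing ``increasing'' to be dropped, is right.

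The genuine gap is in your fix for unbounded $u$. The parenthetical claim that $u\wedge n$ and $v\wedge n$ ``still satisfy the relevant inequality'' is false: truncation does not preserve the star-function ordering. Take $X=[0,2]$, $u\equiv 1$, $v=2\chi_{[0,1]}$. Then $u^{\bigstar}(t)=t\leq\min(2t,2)=v^{\bigstar}(t)$, yet $(u\wedge 1)^{\bigstar}(2)=2>1=(v\wedge 1)^{\bigstar}(2)$; indeed $\int_{X}(u\wedge 1)\,dx>\int_{X}(v\wedge 1)\,dx$, so the majorization conclusion for the truncated pair is simply false (already for $\phi(x)=x$) and no passage to the limit can recover it. The repair is to truncate $\phi$ rather than $u$ and $v$: let $\phi_{n}$ agree with $\phi$ on $(-\infty,n]$ and continue affinely with slope $\phi'_{+}(n)$ beyond, so that $c_{n}=(\phi_{n})'_{+}\circ u^{\ast}$ is bounded and your Abel-summation argument applies verbatim; since $\phi_{n}\uparrow\phi$ and each $\phi_{n}$ dominates a fixed integrable affine minorant, monotone convergence finishes. (In the equal-means case one must also truncate at $-n$, since $\phi'_{+}(u^{\ast}(s))$ may tend to $-\infty$ at the right endpoint.) Alternatively, write $\phi(x)=\alpha+\beta x+\int(x-s)^{+}\,d\mu(s)$ with $\beta\geq 0$, $\mu\geq 0$ and reuse your conjugate identity; that makes the forward direction a corollary of the same computation you already use for the converse and avoids the integration by parts entirely.
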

For a proof of Proposition \ref{Prop:Majorization}, see Propositions 10.1 and 10.3 of \cite{Baernstein Symmetrization in Analysis}.

If the convex means of $u$ are dominated by those of $v$ and additional information is known about $u$ and $v$, we can deduce further inequalities about $L^p$-norms, $\esssup$, $\essinf$, and $\osc$.

\begin{corollary}\label{cor:Lpnormsgen}
\label{cor:consequences of u star leq v star} Say $u,v\in L^{1}(X)$ where $X\subseteq \mathbb{R}$ is a measurable subset of finite length and assume $u^{\bigstar}\leq v^{\bigstar}$ on $[0,|X|]$. If either $u,v\geq 0$ or $\int_{X}u\,dx=\int_{X}v\,dx$, then
\[
\|u\|_{L^{p}(X)}  \leq  \|v\|_{L^{p}(X)},\quad1\leq p\leq +\infty.
\]
If $\int_{X}u\,dx=\int_{X}v\,dx$, moreover
\[
\underset{X}{\esssup}\ u \leq  \underset{X}{\esssup}\ v, \qquad 
\underset{X}{\essinf}\ v \leq  \underset{X}{\essinf}\ u, \qquad
\underset{X}{\osc}\ u \leq  \underset{X}{\osc}\ v,
\]
where $\osc=\esssup - \essinf$.
\end{corollary}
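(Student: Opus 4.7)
The plan is to apply Proposition \ref{Prop:Majorization}, which re-expresses the hypothesis $u^{\bigstar}\leq v^{\bigstar}$ as the family of inequalities $\int_X\phi(u)\,dx\leq\int_X\phi(v)\,dx$ valid for every increasing convex $\phi$, and valid for every convex $\phi$ under the additional hypothesis $\int_X u\,dx=\int_X v\,dx$. Each conclusion of the corollary will then follow by a suitable choice of $\phi$.

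For the $L^p$ inequality with $1\leq p<\infty$, I would take $\phi(x)=\chi_{[0,\infty)}(x)\,x^p$ in the case $u,v\geq 0$ (this $\phi$ is increasing and convex, and $\int_X\phi(u)=\|u\|_p^p$ since $u\geq 0$), and in the equal-integral case I would instead take $\phi(x)=|x|^p$, which is convex but not monotone, so the equal-integral hypothesis is precisely what licenses its use. The endpoint $p=\infty$ then follows by letting $p\to\infty$ in the finite-$p$ bound, using the standard fact that $\|\cdot\|_{L^p(X)}\to\|\cdot\|_{L^\infty(X)}$ as $p\to\infty$ on a finite-measure space; if $\|v\|_\infty=\infty$ the bound is trivial, and otherwise $\|u\|_p\leq\|v\|_p$ forces $u\in L^\infty(X)$ so the limit makes sense.

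For the three one-sided inequalities under the equal-integral hypothesis, I would apply Proposition \ref{Prop:Majorization} with ramp-type test functions. Taking the increasing convex $\phi_k(x)=(x-k)^+$ for $k\geq\esssup v$, one has $\int_X\phi_k(v)=0$, which forces $\int_X\phi_k(u)=0$ and hence $u\leq k$ a.e.; taking the infimum over such $k$ gives $\esssup u\leq\esssup v$. Symmetrically, using the convex (but decreasing) function $\psi_k(x)=(k-x)^+$ for $k\leq\essinf v$ yields $\essinf v\leq\essinf u$; here the equal-integral hypothesis is essential in order to drop the ``increasing'' qualifier in Proposition \ref{Prop:Majorization}. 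The bound on $\osc$ then follows by subtracting these two. Throughout, the only point requiring attention is tracking, for each target inequality, whether the chosen test function is monotone, since that determines when the equal-integral hypothesis is actually needed; beyond that, the argument is entirely mechanical and presents no real obstacle.
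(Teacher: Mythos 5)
Your argument is correct and follows exactly the route the paper intends: the corollary is stated without proof as an immediate consequence of Proposition \ref{Prop:Majorization}, and your choices of test functions ($\chi_{[0,\infty)}(x)x^p$ or $|x|^p$ for the $L^p$ bounds, with $p\to\infty$ for the endpoint, and the ramps $(x-k)^+$, $(k-x)^+$ for $\esssup$, $\essinf$, and hence $\osc$) are precisely the standard ones, matching the computation the authors carry out explicitly in the introduction for the Dirichlet case. The only cosmetic omission is the trivial handling of the degenerate cases $\esssup v=+\infty$ and $\essinf v=-\infty$, which you already flag implicitly for the $L^\infty$ norm.
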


We end the background section with three rearrangement inequalities. These inequalities play a major role in our proofs of Theorems \ref{Th:RComp} and \ref{Th:NComp}. The first two are well known (for discussion and proofs see Theorem 1.2.2 of \cite{Kesevan} and Theorem 8.4 of \cite{Baernstein Symmetrization in Analysis}). The third rearrangement inequality appears to be less well known (for discussion and proof see \cite{BaernsteinS1} or Theorem 8.1 of \cite{Baernstein Symmetrization in Analysis}).
\begin{theorem}[Hardy-Littlewood]\label{th:HL}
Given $f\in L^1[-\pi,\pi]$ and  $g\in L^{\infty}[-\pi,\pi]$, we have
\[
\int_{-\pi}^{\pi}fg\,dx\leq \int_{-\pi}^{\pi}f^{\#}g^{\#}\,dx,
\]
with $\#$ the symmetric decreasing rearrangement.
\end{theorem}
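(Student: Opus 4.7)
My plan is to reduce the inequality, via the layer cake representation, to a pointwise comparison of the measures of intersections of super-level sets. I would first reduce to the case $f, g \geq 0$: adding the constant $\|g\|_\infty$ to $g$ (which gives $(g + \|g\|_\infty)^{\#} = g^{\#} + \|g\|_\infty$) makes it nonnegative, and the shift contributes $\|g\|_\infty \int f\,dx = \|g\|_\infty \int f^{\#}\,dx$ to both sides by equimeasurability, hence cancels. The possible unboundedness of $f$ from below is then handled by the truncation $f_N = \max(f, -N)$ combined with dominated convergence, using $|f_N g| \leq |f|\|g\|_\infty \in L^1$.

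Assume now $f, g \geq 0$. The layer cake formula gives
\[
f(x) = \int_0^{\infty} \chi_{\{f > s\}}(x)\, ds, \qquad g(x) = \int_0^{\infty} \chi_{\{g > t\}}(x)\, dt.
\]
Multiplying and applying Fubini's theorem (justified by $fg \in L^1$ since $f \in L^1$ and $g \in L^{\infty}$) yields
\[
\int_{-\pi}^{\pi} fg\, dx = \int_0^{\infty}\!\! \int_0^{\infty} \bigl|\{f > s\} \cap \{g > t\}\bigr|\, ds\, dt,
\]
and the analogous identity for the integral involving $f^{\#}$ and $g^{\#}$.

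The crux is now a pointwise comparison of these integrands. For any two measurable sets $A, B \subseteq [-\pi, \pi]$, trivially $|A \cap B| \leq \min(|A|, |B|)$; when $A$ and $B$ are both intervals centered at $0$, one contains the other and equality holds. Using equimeasurability, $|\{f^{\#} > s\}| = |\{f > s\}|$ and $|\{g^{\#} > t\}| = |\{g > t\}|$, so
\[
\bigl|\{f > s\} \cap \{g > t\}\bigr| \leq \min\bigl(|\{f > s\}|, |\{g > t\}|\bigr) = \bigl|\{f^{\#} > s\} \cap \{g^{\#} > t\}\bigr|.
\]
Integrating in $s$ and $t$ completes the proof. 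The main obstacle is purely technical — the reduction from signed to nonnegative functions via truncation and limit — since the geometric core, namely that concentric centered intervals maximize pairwise intersection measure among sets of prescribed measure, is essentially immediate from nesting.
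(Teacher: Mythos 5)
Your proof is correct: the reduction to nonnegative functions by adding constants (using equimeasurability of $f$ with $f^{\#}$ and of $g$ with $g^{\#}$ to cancel the shifts) and truncating $f$ from below, followed by the layer-cake/Fubini reduction to the elementary bound $|A\cap B|\leq \min(|A|,|B|)$ with equality for nested centered intervals, is the standard argument. The paper itself gives no proof of this theorem --- it only cites Kesavan and Baernstein --- and your argument is essentially the textbook proof found in those references.
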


\begin{theorem}[Riesz-Sobolev]\label{th:RS}
Suppose $f,g,h\in L^1(\mathbb{R})$ are nonnegative. Then we have
\[
\int_{-\infty}^{\infty}\int_{-\infty}^{\infty}f(x)g(y)h(x-y)\,dy\,dx\leq \int_{-\infty}^{\infty}\int_{-\infty}^{\infty}f^{\#}(x)g^{\#}(y)h^{\#}(x-y)\,dy\,dx,
\]
with $\#$ the symmetric decreasing rearrangement.
\end{theorem}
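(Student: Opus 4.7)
The plan is to prove the Riesz--Sobolev inequality through the standard three-stage reduction that originates with Riesz and was made systematic by Brascamp, Lieb, and Luttinger. First, I would invoke the layer-cake decomposition: since $f,g,h\geq 0$, one writes $f(x)=\int_0^\infty \chi_{\{f>s\}}(x)\,ds$ and analogously for $g$ and $h$. Substituting into the triple integral and applying Fubini's theorem (the integrands are nonnegative) reduces matters to the case in which $f=\chi_A$, $g=\chi_B$, $h=\chi_C$ are characteristic functions of measurable sets of finite measure. The claim to prove becomes
\[
\bigl|\{(x,y):x\in A,\ y\in B,\ x-y\in C\}\bigr|\leq \bigl|\{(x,y):x\in A^{\#},\ y\in B^{\#},\ x-y\in C^{\#}\}\bigr|,
\]
where $A^{\#},B^{\#},C^{\#}$ are the symmetric intervals about $0$ of the same lengths.

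Second, I would approximate each of $A$, $B$, $C$ from above by finite unions of disjoint bounded open intervals, using outer regularity of Lebesgue measure, and then pass to the limit by dominated convergence (observing that the symmetric decreasing rearrangement is continuous in measure). This permits me to assume throughout the remainder of the argument that $A$, $B$, $C$ are each finite disjoint unions of bounded open intervals.

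Third, and this is where the work lies, I would implement a \emph{compression} argument. Given $A=\bigsqcup_{i=1}^{N}(a_i,b_i)$, I would define a one-parameter family of translations that slides the leftmost interval of $A\cap(0,\infty)$ and the rightmost interval of $A\cap(-\infty,0)$ inward toward one another, merging them if they collide, while leaving $B$ and $C$ fixed. The critical claim is that the trilinear quantity $\int\int \chi_A(x)\chi_B(y)\chi_C(x-y)\,dy\,dx$ is monotone nondecreasing along this slide. After finitely many applications to each of the three variables, every set is reduced to a single interval, and translation invariance of the Lebesgue measure lets me center each at $0$.

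The principal obstacle is the monotonicity step: because $C$ is a union of several intervals, the ``good'' region $\{(x,y):x-y\in C\}$ is a union of diagonal strips whose interaction with the two moving intervals of $A$ requires careful bookkeeping. The cleanest route is to freeze all data except the positions of the two moving intervals, write the integral as a function of a single sliding parameter $t$, and show by a direct convolution-based computation that $t\mapsto \int\int \chi_{A(t)}(x)\chi_B(y)\chi_C(x-y)\,dy\,dx$ is concave; concavity together with symmetry then forces the maximum to occur at the centered configuration. Alternatively, one can appeal to a continuous Steiner-type flow as in Brascamp--Lieb--Luttinger and verify monotonicity along the flow; in dimension one both approaches amount to the same elementary but fussy interval-geometry lemma, which is the single nontrivial ingredient of the whole argument.
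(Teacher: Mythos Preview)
The paper does not supply its own proof of Theorem~\ref{th:RS}; it states the inequality as background and refers the reader to Theorem~1.2.2 of \cite{Kesevan} and Theorem~8.4 of \cite{Baernstein Symmetrization in Analysis}. So there is no in-paper argument against which to compare yours.

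That said, your outline follows the classical Riesz/Brascamp--Lieb--Luttinger route and is sound in its broad strokes: layer-cake to reduce to characteristic functions of finite-measure sets, regularity to reduce to finite unions of intervals, and then a one-dimensional sliding/compression lemma. This is essentially the proof one finds in the cited references (and in \cite{Leib and Loss Analysis}), so you are not taking a different route---you are reconstructing the standard one.

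One remark on the third step: your description of the slide (``leftmost interval of $A\cap(0,\infty)$ and rightmost interval of $A\cap(-\infty,0)$ moving inward'') is a little imprecise, and the concavity-plus-symmetry heuristic you invoke does not by itself give monotonicity along the flow unless you arrange the slide to be symmetric about the centroid at every stage. The clean statement, proved for instance in \cite{Leib and Loss Analysis}, is that for three intervals the trilinear form is maximized when all are centered at the origin; the reduction from finite unions to single intervals then proceeds by moving the two \emph{outermost} intervals of one set toward each other at equal speed until they merge, and checking the resulting function of the sliding parameter is nondecreasing. You have the right idea, but in a written-up version you would want to state and prove that elementary interval lemma carefully rather than gesture at concavity.
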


\begin{theorem}[Baernstein]\label{th:Baernstein}
Let $f,g\in L^1[-\pi,\pi]$ and $h\in L^{\infty}[-\pi,\pi]$ be $2\pi$-periodic functions on $\mathbb{R}$. Then
\[
\int_{-\pi}^{\pi}\int_{-\pi}^{\pi}f(x)g(y)h(x-y)\,dy\,dx\leq \int_{-\pi}^{\pi}\int_{-\pi}^{\pi}f^{\#}(x)g^{\#}(y)h^{\#}(x-y)\,dy\,dx,
\]
with $\#$ the $2\pi$-periodic extension of symmetric decreasing rearrangement on $[-\pi,\pi]$ to all of $\mathbb{R}$.
\end{theorem}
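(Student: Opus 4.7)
The plan is to prove this by two-point symmetrization (polarization) on the circle, combined with a layer-cake reduction to indicator functions. First, by splitting into positive and negative parts and applying the layer-cake formula $f = \int_0^\infty \chi_{\{f>s\}}\,ds$ to each of $f$, $g$, $h$, together with the identity $\chi_{\{f>s\}}^\# = \chi_{\{f^\# > s\}}$, the inequality reduces to the case
\[
f = \chi_A, \qquad g = \chi_B, \qquad h = \chi_C,
\]
where $A, B, C$ are measurable subsets of the circle $\mathbb{T} = \mathbb{R}/2\pi\mathbb{Z}$. The triple integral then becomes the three-set convolution count
\[
I(A,B,C) := \big|\{(x,y)\in \mathbb{T}^2 : x\in A,\ y\in B,\ x-y \in C\}\big|,
\]
and the goal reduces to proving $I(A,B,C) \le I(A^\#, B^\#, C^\#)$.

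Next, I would introduce the circular polarization. Given a pair of antipodal points cutting $\mathbb{T}$ into arcs $I^+$ (the preferred arc containing $0$) and $I^- = \sigma(I^+)$, where $\sigma$ is the associated reflection, set
\[
E^\sigma := \{x \in I^+ : x \in E \cup \sigma E\} \cup \{x \in I^- : x \in E \cap \sigma E\}.
\]
Circular polarizations preserve Lebesgue measure, and a classical approximation theorem of Brock--Solynin type (see Chapter 3 of Baernstein's monograph \cite{Baernstein Symmetrization in Analysis}) provides a sequence $\sigma_n$ such that the iterates $E_n := \sigma_n \cdots \sigma_1(E)$ converge to the symmetric decreasing rearrangement $E^\#$ in $L^1(\mathbb{T})$ for every measurable $E \subseteq \mathbb{T}$. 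By bounded convergence it therefore suffices to prove the single-step monotonicity
\[
I(A,B,C) \le I(A^\sigma, B^\sigma, C^\sigma).
\]

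The main obstacle is this single-polarization inequality. To establish it I would partition $\mathbb{T}^2$ into four-point $\sigma$-orbits $\mathcal{O}(x,y) = \{(x,y),(\sigma x,y),(x,\sigma y),(\sigma x,\sigma y)\}$ and compare each orbit's contribution to $I$ before and after polarization. The delicate point is that on the circle the reflection $\sigma$ is an isometry but not a group homomorphism: depending on which arc $x-y$ lies in, one has either $\sigma(x-y) = \sigma x - y$ or $\sigma(x-y) = x - \sigma y$, with analogous case splits at the other orbit vertices. Once this algebraic bookkeeping is tabulated, each orbit contribution reduces to a finite comparison over the $2^3 = 8$ possible indicator patterns of $(A,B,C)$ on the three relevant points $x,y,x-y$, and in each pattern one checks directly that polarization does not decrease the count. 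Combining the orbit-wise inequality with the polarization-approximation step, then undoing the layer-cake, yields the theorem.
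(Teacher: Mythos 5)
The paper does not actually prove Theorem \ref{th:Baernstein}: it is quoted as a known result, with the proof deferred to \cite{BaernsteinS1} and Theorem 8.1 of \cite{Baernstein Symmetrization in Analysis}. So your proposal is an attempt to reprove the cited result, and its overall shape (layer-cake reduction to indicator sets, then polarization plus a Brock--Solynin-type approximation) is the standard modern route; the reduction to $I(A,B,C)\le I(A^{\#},B^{\#},C^{\#})$ is fine, modulo the routine normalization needed because $f,g,h$ may be signed.

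The gap is in the polarization step, and it is not mere bookkeeping. For a reflection $\sigma x=2a-x$ about an antipodal pair one has $\sigma x-\sigma y=-(x-y)$ and $x-\sigma y=-(\sigma x-y)$; the identities you assert ($\sigma(x-y)=\sigma x-y$ or $\sigma(x-y)=x-\sigma y$) are false. Consequently the four differences arising from a $\sigma$-orbit of $(x,y)$ pair up under the reflection $\tau z=-z$ about $\{0,\pi\}$, \emph{not} under $\sigma$, so the single-step inequality your scheme requires would have to read $I(A,B,C)\le I(A^{\sigma},B^{\sigma},C^{\tau})$ rather than $I(A^{\sigma},B^{\sigma},C^{\sigma})$. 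This breaks the approximation step structurally: $\tau$ is the \emph{same} reflection for every choice of $\sigma$, so iterating polarizations can drive $A_n\to A^{\#}$ and $B_n\to B^{\#}$ but leaves the third set stuck after one step; it cannot converge to $C^{\#}$. This is precisely why the three-set Riesz inequality on the circle is harder than the two-functions-plus-symmetric-kernel version, where polarization does work cleanly: if $C=C^{\#}$ is an arc centered at $0$, then $\chi_{C}(x-y)$ is a decreasing function of the distance $d(x,y)$, the two-point lemma gives $I(A,B,C)\le I(A^{\sigma},B^{\sigma},C)$ with $C$ untouched, and one may iterate. Handling a general third set requires an additional idea (for instance exploiting the symmetry of the trilinear form under permuting the roles of $f$, $\check g$, $\check h$, or following Baernstein's original argument in \cite{BaernsteinS1}). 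Finally, the orbit-wise verification over the eight indicator patterns --- the actual content of the two-point inequality --- is asserted (``one checks directly'') rather than carried out. As written, the proposal does not constitute a proof.
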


\section{Proofs of Main Results}

\subsection*{The Robin problem}

We start this section with a proof of our first main result.

\begin{proof}[Proof of Theorem \ref{Th:RComp}]
Say $E\subseteq [-\pi,\pi]$ is a measurable subset. Then
\[
\int_Eu(x)\,dx=\int_{-\pi}^{\pi}\int_{-\pi}^{\pi}\chi_E(x)f(y)G(x,y)\,dy\,dx,
\]
where $G$ is the Robin Green's function from Proposition \ref{Prop:RGreen}. Observe that
\begin{align*}
G(x,y)&=-\frac{1}{2}c_{\alpha}xy-\frac{1}{2}|x-y|+\frac{1}{2c_{\alpha}}\\
&=-\frac{1}{4}c_{\alpha}(x^2+y^2)+\frac{1}{4}c_{\alpha}(x-y)^2-\frac{1}{2}|x-y|+\frac{1}{2c_{\alpha}}
\end{align*}
where $c_\alpha$ is the constant given in (\ref{eq:calphadef}).  Since $f,f^{\#}$ are rearrangements and nonnegative, applying Theorem \ref{th:HL} to the $dx$ integral yields
\begin{equation}\label{eq:term1}
\frac{1}{4}c_{\alpha}\int_{-\pi}^{\pi}\int_{-\pi}^{\pi}\chi_E(x)f(y)(-x^2)\,dy\,dx \leq \frac{1}{4}c_{\alpha}\int_{-\pi}^{\pi}\int_{-\pi}^{\pi}\chi_E^{\#}(x)f^{\#}(y)(-x^2)\,dy\,dx.
\end{equation}
Similarly, $\chi_E,\chi_E{^{\#}}$ are rearrangements, so applying Theorem \ref{th:HL} to the $dy$ integral gives
\begin{equation}\label{eq:term2}
\frac{1}{4}c_{\alpha}\int_{-\pi}^{\pi}\int_{-\pi}^{\pi}\chi_E(x)f(y)(-y^2)\,dy\,dx\leq \frac{1}{4}c_{\alpha}\int_{-\pi}^{\pi}\int_{-\pi}^{\pi}\chi_E^{\#}(x)f^{\#}(y)(-y^2)\,dy\,dx.
\end{equation}

Next, write
\[
h(z)=\frac{1}{4}c_{\alpha}z^2-\frac{1}{2}|z|,\qquad z\in [-2\pi,2\pi].
\]
Write $\tilde h$ for the $2\pi$-periodic extension of $h\big|_{[-\pi,\pi]}$ to all of $\mathbb{R}$. When $z\in(0,\pi]$, note that
\[
h'(z)=\frac{1}{2}c_{\alpha}z-\frac{1}{2}\leq \frac{1}{2}c_{\alpha}\pi-\frac{1}{2}=-\frac{1}{2(1+\alpha \pi)}<0.
\]
Thus, $h$ is symmetric decreasing on $[-\pi,\pi]$. It follows from Theorem \ref{th:Baernstein} that
\begin{equation}\label{eq:UseBaernstein}
\int_{-\pi}^{\pi}\int_{-\pi}^{\pi}\chi_E(x)f(y)\tilde h(x-y)\,dy\,dx \leq \int_{-\pi}^{\pi}\int_{-\pi}^{\pi}\chi_E^{\#}(x)f^{\#}(y)\tilde h(x-y)\,dy\,dx.
\end{equation}
Moreover, on $[\pi,2\pi]$,
\begin{align*}
h(z)-\tilde h(z)&= h(z)-h(2\pi-z)\\
&=\frac{1}{4}c_{\alpha}z^2-\frac{1}{2}z-\left(\frac{1}{4}c_{\alpha}(2\pi-z)^2-\frac{1}{2}(2\pi-z)\right)\\
&=(z-\pi)(c_{\alpha}\pi-1).
\end{align*}
As we saw above, $c_{\alpha}\pi-1<0$. And since $h=\tilde h$ on $[0,\pi]$, it follows that $h-\tilde h$ is symmetric decreasing on $[-2\pi,2\pi]$. Extend $\chi_E$ and $f$ to vanish outside $[-\pi,\pi]$ and extend $h-\tilde h+\pi(1-c_{\alpha}\pi)$ to vanish outside $[-2\pi,2\pi]$. Then Theorem \ref{th:RS} gives
\begin{align*}
&\int_{-\pi}^{\pi}\int_{-\pi}^{\pi}\chi_E(x)f(y)\left((h-\tilde h)(x-y)+\pi(1-c_{\alpha}\pi)\right)\,dy\,dx\\
&\qquad \qquad \qquad \qquad  \leq \int_{-\pi}^{\pi}\int_{-\pi}^{\pi}\chi_E^{\#}(x)f^{\#}(y)\left((h-\tilde h)(x-y)+\pi(1-c_{\alpha}\pi)\right)\,dy\,dx.
\end{align*}
Since $\chi_E,\chi_E{^{\#}}$ and $f,f^{\#}$ are rearrangements, the inequality above gives
\begin{equation}\label{eq:UseRS}
\int_{-\pi}^{\pi}\int_{-\pi}^{\pi}\chi_E(x)f(y)(h-\tilde h)(x-y)\,dy\,dx \leq \int_{-\pi}^{\pi}\int_{-\pi}^{\pi}\chi_E^{\#}(x)f^{\#}(y)(h-\tilde h)(x-y)\,dy\,dx.
\end{equation}
Combining inequalities \eqref{eq:UseBaernstein} and \eqref{eq:UseRS} and noting $\chi_E^{\#}=\chi_{E^{\#}}$ gives
\begin{equation}\label{eq:term3}
\int_{-\pi}^{\pi}\int_{-\pi}^{\pi} \chi_{E}(x)f(y)h(x-y)\,dy\,dx \leq \int_{-\pi}^{\pi}\int_{-\pi}^{\pi} \chi_{E^{\#}}(x)f^{\#}(y)h(x-y)\,dy\,dx.
\end{equation}
Finally, note that
\begin{equation}\label{eq:term4}
\frac{1}{2c_{\alpha}}\int_{-\pi}^{\pi}\int_{-\pi}^{\pi}\chi_{E}(x)f(y)\,dy\,dx=\frac{1}{2c_{\alpha}}\int_{-\pi}^{\pi}\int_{-\pi}^{\pi}\chi_{E^{\#}}(x)f^{\#}(y)\,dy\,dx,
\end{equation}
again as $\chi_E,\chi_E{^{\#}}$ and $f,f^{\#}$ are rearrangements. Combining \eqref{eq:term1}, \eqref{eq:term2}, \eqref{eq:term3}, and \eqref{eq:term4} shows
\[
\int_{E}u(x)\,dx\leq \int_{E^{\#}}v(x)\,dx.
\]
Taking the $\sup$ over all measurable subsets $E\subseteq [-\pi,\pi]$ with fixed length, say $t$, the inequality above gives
\[
u^{\bigstar}(t) \leq \int_{-\frac{t}{2}}^{\frac{t}{2}}v(x)\,dx \leq v^{\bigstar}(t).
\]
The theorem's claims about convex means now follows from Proposition \ref{Prop:Majorization}. To prove the remaining claims, we next argue that $u,v\geq 0$. First note that $u$ is concave, so $\underset{[-\pi,\pi]}{\min}\ u$ is achieved at either $-\pi$ or $\pi$. Suppose the minimum occurs at $\pi$. If $u(\pi)<0$, then from the Robin boundary condition we see
\[
u'(\pi)=-\alpha u(\pi)>0,
\]
and so $\underset{[-\pi,\pi]}{\min}\ u$ cannot be achieved at $\pi$. We conclude that $u(\pi)\geq 0$. An identical argument applies when the minimum occurs at $-\pi$ from which we conclude that $u$ is nonnegative.  Since the same argument applies to the function $v,$ we conclude that $v$ is also nonnegative. The theorem's remaining conclusions now follow from Proposition \ref{cor:Lpnormsgen}.
\end{proof} 

\subsection*{From Robin to Dirichlet} As mentioned in the introduction, our Robin comparison principle implies the corresponding Dirichlet result. The main idea behind the proof is that as $\alpha$ tends to $+\infty$, the Robin boundary condition converts into a Dirichlet condition.

\begin{corollary}[ODE Dirichlet Comparison Principle]\label{Cor:DComp}
Let $0\leq f\in L^1[-\pi,\pi]$ and suppose $u$ and $v$ solve the Poisson problems
\begin{align*}
-u''&=f  \quad \textup{in} \quad (-\pi,\pi), \qquad u(-\pi)=u(\pi)=0,\\
-v''&=f^{\#}  \quad \textup{in} \quad (-\pi,\pi), \qquad v(-\pi)=v(\pi)=0,
\end{align*}
with $f^{\#}$ the symmetric decreasing rearrangement of $f$. Then
\[
\int_{-\pi}^{\pi}\phi(u)\,dx\leq \int_{-\pi}^{\pi}\phi(v)\,dx
\]
for each increasing convex function $\phi:\mathbb{R}\to \mathbb{R}$.
\end{corollary}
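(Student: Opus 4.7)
The plan is to obtain the Dirichlet inequality by passing to the limit $\alpha \to +\infty$ in the Robin inequality already established in Theorem \ref{Th:RComp}. For each $\alpha > 0$, let $u_\alpha$ and $v_\alpha$ denote the Robin solutions with data $f$ and $f^{\#}$, respectively, so that
\[
\int_{-\pi}^{\pi}\phi(u_\alpha)\,dx \leq \int_{-\pi}^{\pi}\phi(v_\alpha)\,dx
\]
for every increasing convex $\phi:\mathbb{R}\to\mathbb{R}$. If I can show $u_\alpha \to u$ and $v_\alpha \to v$ uniformly on $[-\pi,\pi]$ as $\alpha \to +\infty$, then since $\phi$ is convex (hence continuous) and the $u_\alpha, v_\alpha$ remain uniformly bounded, $\phi$ will be bounded on a common interval containing their ranges and the dominated convergence theorem will justify passing to the limit, yielding the corollary.

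To establish this uniform convergence, I will use the explicit Green's function formula from Proposition \ref{Prop:RGreen}. Since $c_\alpha = \alpha/(1+\alpha\pi) \to 1/\pi$ as $\alpha \to +\infty$, the kernel satisfies
\[
G_\alpha(x,y) = -\tfrac{1}{2}c_\alpha xy - \tfrac{1}{2}|x-y| + \tfrac{1}{2c_\alpha} \;\longrightarrow\; G_D(x,y) := -\tfrac{xy}{2\pi} - \tfrac{1}{2}|x-y| + \tfrac{\pi}{2}
\]
uniformly on $[-\pi,\pi]^2$. Evaluating shows $G_D(\pm\pi,y)=0$ for all $y$, and by the same computation used to prove Proposition \ref{Prop:RGreen} the kernel $G_D$ satisfies $-\partial_x^2 G_D(x,y)=\delta_x(y)$; a short argument (absolute continuity of $\int G_D(\cdot,y) f(y)\,dy$ in $x$ plus the vanishing boundary values) identifies $u(x)=\int_{-\pi}^{\pi}G_D(x,y)f(y)\,dy$ as the unique Dirichlet solution with data $f$, and similarly for $v$ with data $f^{\#}$. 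Since $u_\alpha(x)=\int G_\alpha(x,y)f(y)\,dy$ and $f\in L^1$, the uniform convergence $G_\alpha \to G_D$ on $[-\pi,\pi]^2$ forces $\|u_\alpha-u\|_{L^\infty[-\pi,\pi]} \to 0$, and likewise $v_\alpha \to v$ uniformly.

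The argument is almost entirely mechanical once the Green's function limit is identified; I expect the only real obstacle to be the verification that $G_D$ is indeed the Dirichlet Green's function on $[-\pi,\pi]$, a short direct computation that should mirror the proof of Proposition \ref{Prop:RGreen}. One could alternatively sidestep the identification of $G_D$ by observing that $u_\alpha$ has a uniformly bounded second derivative in $L^1$ (since $-u_\alpha''=f$), and that the Robin boundary condition rewrites as $u_\alpha(\pm\pi) = \mp u_\alpha'(\pm\pi)/\alpha$, so uniform control on $u_\alpha'(\pm\pi)$ from the Green's function formula for $u_\alpha'$ forces $u_\alpha(\pm\pi) \to 0$; combined with the uniform $L^1$ bound on $u_\alpha''$, this gives a limit that must solve the Dirichlet problem, hence must equal $u$ by uniqueness.
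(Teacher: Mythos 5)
Your proposal is correct and follows essentially the same route as the paper: approximate the Dirichlet problem by Robin problems with $\alpha\to+\infty$, use the uniform convergence of the Robin Green's functions $G_\alpha$ to the Dirichlet Green's function $G_D$ to get $u_\alpha\to u$ and $v_\alpha\to v$ uniformly, and pass to the limit in the Robin comparison. The only (harmless) difference is the final limiting step: you pass to the limit directly in $\int\phi(u_\alpha)\,dx\le\int\phi(v_\alpha)\,dx$ using the uniform bound on the solutions and continuity of $\phi$, whereas the paper passes to the limit in the equivalent star-function inequality $u_\alpha^{\bigstar}\le v_\alpha^{\bigstar}$ via the $L^1$-non-expansiveness of rearrangement and then invokes Proposition \ref{Prop:Majorization}; both are valid, and yours is marginally more direct.
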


\begin{proof}
Let $f,u$, and $v$ be as stated, and suppose $u_k$ and $v_k$ solve the Robin problems
\begin{align*}
-u_k''&=f  \quad \textup{in} \quad (-\pi,\pi), \qquad -u_k'(-\pi)+\alpha_k u_k(-\pi)=u_k'(\pi)+\alpha_k u_k(\pi)=0,\\
-v_k''&=f^{\#}  \quad \textup{in} \quad (-\pi,\pi), \qquad -v_k'(-\pi)+\alpha_k v_k(-\pi)=v_k'(\pi)+\alpha_k v_k(\pi)=0,
\end{align*}
where $0<\alpha_k\to +\infty$. Then by Theorem \ref{Th:RComp} and Proposition \ref{Prop:Majorization},
\begin{equation}\label{eq:ukvkstar}
u_k^{\bigstar}(t)\leq v_k^{\bigstar}(t),\qquad t\in [0,2\pi].
\end{equation}
If $G_{\alpha_k}$ denotes the Robin Green's function of Proposition \ref{Prop:RGreen} with parameter $\alpha_k$, then
\[
G_{\alpha_k}(x,y)\to G(x,y)
\]
uniformly on $[-\pi,\pi]\times[-\pi,\pi]$, where
\[
G(x,y)=-\frac{1}{2\pi}xy-\frac{1}{2}|x-y|+\frac{\pi}{2}
\]
is the Dirichlet Green's function on $[-\pi,\pi]$. Using the Green's representation, we see $u_k\to u$ and $v_k\to v$ uniformly on $[-\pi,\pi]$. Since symmetrization decreases the $L^1$-distance, we note that for any $t\in[0,2\pi]$,
\begin{align*}
|u_k^{\bigstar}(t)-u^{\bigstar}(t)|&= \left|\int_{-\frac{t}{2}}^{\frac{t}{2}}\left(u_k^{\#}(x)-u^{\#}(x)\right)\,dx\right|\\
&\leq \int_{-\pi}^{\pi}\left|u_k^{\#}(x)-u^{\#}(x)\right|\,dx\\
&\leq \int_{-\pi}^{\pi}\left|u_k(x)-u(x)\right|\,dx,
\end{align*}
and this last term tends to zero by uniform convergence. Thus we have pointwise convergence of star functions:
\begin{equation}\label{eq:ukstar}
u^{\bigstar}_k(t)\to u^{\bigstar}(t),\qquad t\in[0,2\pi].
\end{equation}
A similar argument shows
\begin{equation}\label{eq:vkstar}
v^{\bigstar}_k(t)\to v^{\bigstar}(t),\qquad t\in[0,2\pi].
\end{equation}
Combining inequality \eqref{eq:ukvkstar} with \eqref{eq:ukstar} and \eqref{eq:vkstar} gives $u^{\bigstar}\leq v^{\bigstar}$ in $[0,2\pi]$, and this inequality is equivalent to the corollary's conclusion courtesy of Proposition \ref{Prop:Majorization}.

\end{proof}

The conclusion of Corollary \ref{Cor:DComp} can be strengthened to $u^{\#} \leq v$ in $[-\pi,\pi]$, though this stronger conclusion is not needed for our paper. The argument is simple but requires additional tools developed by Baernstein. We include the proof for the sake of completeness.

Define
\[
w(s)=u^{\bigstar}(2s)-v^{\bigstar}(2s),\qquad s\in[0,\pi].
\]
Then by Theorem 9.20 in \cite{Baernstein Symmetrization in Analysis} or Theorem 5 of \cite{Baernstein Cortona Volume}, $\frac{d^2}{ds^2}w(s)\geq 0$ weakly. That is,
\[
\int_{0}^{\pi}w(s)G''(s)\,ds \geq 0
\]
for each $G_c^2(0,\pi)$ nonnegative with compact support. Integrating by parts, we see
\[
\int_{0}^{\pi}w'(s)G'(s)\,ds \leq 0.
\]
An easy argument gives that $w'(s)$ is increasing. But
\[
\frac{d}{ds}w(s)=\frac{d}{ds}\int_{-s}^s\left(u^{\#}(x)-v^{\#}(x)\right)\,dx=2\left(u^{\#}(s)-v^{\#}(s)\right)
\]
and so we see that $u^{\#}(s)-v^{\#}(s)$ is increasing for $s\in[0,\pi]$. But as $u^{\#}(\pi)=v^{\#}(\pi)=0$, we conclude  $u^{\#}(s)-v^{\#}(s)\leq 0$. Finally, since $v=v^{\#}$, this last inequality implies $u^{\#}\leq v$ on $[-\pi,\pi]$.

\subsection*{An interesting example} As noted in the introduction, with Robin problems the oscillation need not increase under symmetric decreasing rearrangement. Consider the following example.

\begin{example}\label{ex:osc}
Consider the solutions $u$ and $v$ to the Poisson problems of Theorem \ref{Th:RComp} with $f=\chi_{[-\pi,0]}$:
\begin{align*}
-u''&=\chi_{[-\pi,0]}  \quad \textup{in} \quad [-\pi,\pi], \qquad -u'(-\pi)+\alpha u(-\pi)=u'(\pi)+\alpha u(\pi)=0,\\
-v''&=\chi_{\left[-\frac{\pi}{2},\frac{\pi}{2}\right]}  \quad \textup{in} \quad [-\pi,\pi], \qquad -v'(-\pi)+\alpha v(-\pi)=v'(\pi)+\alpha v(\pi)=0.
\end{align*}
\end{example}
It is straightforward to check that
\[
u(x)=-u_1(x)+\left(\frac{\pi}{2}+\frac{\pi^2}{4}c_{\alpha}\right)x+\frac{\pi}{2c_{\alpha}}+\frac{\pi^2}{4},
\]
where
\[
u_1(x)=
\begin{cases}
\frac{1}{2}x^2+\pi x+\frac{\pi^2}{2} & \textup{if } -\pi\leq x< 0,\\
\pi x+\frac{\pi^2}{2} & \textup{if } 0\leq x\leq \pi,
\end{cases}
\]
and $c_{\alpha}$ is defined in \eqref{eq:calphadef}. Similarly,
\[
v(x)=-v_1(x)+\frac{\pi}{2}x+\frac{\pi}{2c_{\alpha}},
\]
where
\[
v_1(x)=
\begin{cases}
0 & \textup{if }-\pi \leq x<-\frac{\pi}{2},\\
\frac{1}{2}x^2+\frac{\pi}{2}x+\frac{\pi^2}{8} & \textup{if } -\frac{\pi}{2} \leq x < \frac{\pi}{2},\\
\pi x & \textup{if } \frac{\pi}{2}\leq  x\leq \pi.
\end{cases}
\]
Since $v$ is symmetric decreasing, we have
\[
\underset{[-\pi,\pi]}{\osc}\ v=v(0)-v(\pi)=\frac{3\pi^2}{8}.
\]
Note, rather curiously, that this oscillation is independent of $\alpha$. On the other hand,
\[
\underset{[-\pi,\pi]}{\osc}\ u \geq u\left(-\frac{\pi}{2}\right)-u(\pi)=\frac{\pi^2(5+2\alpha \pi)}{8(1+\alpha \pi)}.
\]
Now it is easy to verify that $\frac{3\pi^2}{8}<\frac{\pi^2(5+2\alpha \pi)}{8(1+\alpha \pi)}$ so long as $\alpha<\frac{2}{\pi}$, which ensures $\underset{[-\pi,\pi]}{\osc}\ v < \underset{[-\pi,\pi]}{\osc}\ u$.

Example \ref{ex:osc} leads to an interesting open question.

\begin{problem}[Open]\label{Prob:1dRodRobinOpen}
Suppose half of a rod of length $2\pi$ with Robin boundary conditions is heated uniformly, while in the remaining half, heat is neither generated nor absorbed. Where should we place the heat sources to maximize the temperature gap?
\end{problem}

Our intuition suggests that an optimal source in Problem \ref{Prob:1dRodRobinOpen} is an interval of length $\pi$. Assuming this is the case, we have the following proposition.

\begin{proposition}\label{prop:robingapint}
Assume the temperature gap in Problem \ref{Prob:1dRodRobinOpen} is maximized by a source interval of length $\pi$. Then when $0<\alpha\leq \frac{2}{\sqrt{3}\pi}$, the temperature gap is maximized by locating the source interval at either end of the rod. As $\alpha$ increases from $\frac{2}{\sqrt{3}\pi}$ to $+\infty$, the temperature gap is maximized by a source interval that continuously transitions from one end of the rod towards its center.
\end{proposition}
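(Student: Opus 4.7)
The plan is to parametrize the length-$\pi$ source intervals, compute the oscillation of the corresponding Robin solution $u_a$ in closed form via Proposition~\ref{Prop:RGreen}, and then optimize over one real parameter. The Robin boundary conditions are invariant under $x \mapsto -x$, so every length-$\pi$ interval in $[-\pi,\pi]$ can be written $I_a = [a - \frac{\pi}{2}, a + \frac{\pi}{2}]$ with $a \in [-\frac{\pi}{2},\frac{\pi}{2}]$, and the oscillation of the corresponding $u_a$ depends only on $|a|$; it therefore suffices to maximize over $a \in [0, \frac{\pi}{2}]$, where $a = 0$ is the centered source and $a = \frac{\pi}{2}$ is a source at one end.

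Writing $u_a(x) = \int_{I_a} G(x,y)\, dy$ and integrating piecewise (splitting on whether $x$ lies left of, inside, or right of $I_a$) produces an explicit piecewise-quadratic expression for $u_a$. Since $u_a'' = -\chi_{I_a} \le 0$, $u_a$ is concave, so its minimum on $[-\pi,\pi]$ occurs at an endpoint. A direct calculation gives $u_a(-\pi) - u_a(\pi) = \pi a(\pi c_\alpha - 1)$, which is nonpositive for $a \ge 0$ because $\pi c_\alpha = \alpha\pi/(1+\alpha\pi) < 1$; hence $\min u_a = u_a(-\pi)$. Setting $u_a'(x) = 0$ on the interior region locates the unique interior maximizer $x_a^{\star} = a\bigl(1 - \tfrac{\pi c_\alpha}{2}\bigr)$, and one checks $x_a^\star \in I_a$ using $\pi c_\alpha < 1$.

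Plugging in and simplifying with $\beta := \pi c_\alpha = \alpha\pi/(1+\alpha\pi) \in (0,1)$ yields the clean formula
\[
\osc(u_a) \;=\; u_a(x_a^\star) - u_a(-\pi) \;=\; -\frac{\beta(4-\beta)}{8}\,a^2 \,+\, \frac{\pi(1-\beta)}{2}\,a \,+\, \frac{3\pi^2}{8},
\]
a concave quadratic in $a$ with unconstrained maximizer $a_\star(\beta) = \frac{2\pi(1-\beta)}{\beta(4-\beta)}$. The equation $a_\star(\beta) = \frac{\pi}{2}$ reduces to $\beta^2 - 8\beta + 4 = 0$, whose only root in $(0,1)$ is $\beta = 4 - 2\sqrt{3}$; inverting the relation $\alpha = \beta/[\pi(1-\beta)]$ then produces the critical value $\alpha = 2/(\sqrt{3}\pi)$. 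A short derivative computation shows $a_\star(\beta)$ is strictly decreasing on $(0,1)$ (the relevant numerator simplifies to $-[(\beta-1)^2 + 3] < 0$), with $a_\star(\beta) \to 0$ as $\beta \to 1^-$. Hence for $0 < \alpha \le 2/(\sqrt{3}\pi)$ we have $a_\star(\beta) \ge \frac{\pi}{2}$, so the constrained maximum of $\osc(u_a)$ on $[0, \frac{\pi}{2}]$ is attained at the endpoint $a = \frac{\pi}{2}$ (source at an end of the rod); for $\alpha > 2/(\sqrt{3}\pi)$ the constrained maximizer is the interior point $a_\star(\beta) \in [0,\frac{\pi}{2})$, and this point continuously decreases from $\frac{\pi}{2}$ down to $0$ as $\alpha$ ranges from $2/(\sqrt{3}\pi)$ to $+\infty$, which is the claimed transition of the optimal source from an end toward the center.

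The main obstacle is essentially bookkeeping rather than a conceptual one: carrying out the three-region Green's-function integration and the piecewise analysis of $u_a'$ carefully enough that the clean quadratic above and the tidy factorization $\beta^2 - 8\beta + 4$ emerge. No new symmetrization input is needed beyond what has already been developed in the paper; once the closed-form expression for $\osc(u_a)$ is in hand, the remaining $\alpha$-analysis is elementary single-variable calculus.
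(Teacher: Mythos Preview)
Your proof is correct and follows essentially the same strategy as the paper: parametrize the length-$\pi$ source intervals by their center, compute the oscillation of the corresponding Robin solution explicitly, and optimize the resulting one-variable function. Your reparametrization via $\beta = \pi c_\alpha$ produces a particularly clean quadratic in $a$ and a by-hand derivation of the threshold $\alpha = 2/(\sqrt{3}\pi)$ through the factorization $\beta^2 - 8\beta + 4 = 0$, whereas the paper carries out the equivalent computation directly in the $\alpha$-variable (with Mathematica) and arrives at the same critical point and monotonicity of the optimizer.
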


\begin{proof}
Denote $I_b=\left[b-\frac{\pi}{2},b+\frac{\pi}{2}\right]$ for $-\frac{\pi}{2}\leq b\leq \frac{\pi}{2}$. We consider the gap function
\[
\textup{Gap}(\alpha,b)=\max_{[-\pi,\pi]}u-\min_{[-\pi,\pi]}u,\qquad 0<\alpha<\infty, \quad -\frac{\pi}{2}\leq b\leq \frac{\pi}{2},
\]
where $u$ is the solution of the Poisson problem
\[
-u''=\chi_{I_b}  \quad \textup{in} \quad (-\pi,\pi), \qquad -u'(-\pi)+\alpha u(-\pi)=u'(\pi)+\alpha u(\pi)=0.
\]
By symmetry, one has
\[
\textup{Gap}(\alpha,-b)=\textup{Gap}(\alpha,b),\quad 0\leq b\leq \frac{\pi}{2},
\]
and so for the remainder of the argument, we focus our attention on $\textup{Gap}$ when $-\frac{\pi}{2}\leq b\leq 0$, i.e. on source intervals whose center lies in the left half of the rod.

With a bit of work and help from Mathematica, one computes
\[
\textup{Gap}(\alpha,b)=-\frac{\pi(1+\alpha(b+\pi))(-3\pi(1+\alpha\pi)+b(4+3\alpha\pi))}{8(1+\alpha \pi)^2}
\]
and so
\begin{align}
\frac{\partial\textup{Gap}}{\partial b}(\alpha,b)&=-\frac{\pi(2+2\alpha \pi+\alpha b(4+3\alpha \pi))}{4(1+\alpha \pi)^2},\label{eq:gapfirstd}\\
\frac{\partial^2\textup{Gap}}{\partial b^2}(\alpha,b)&=-\frac{\alpha \pi (4+3\alpha\pi)}{4(1+\alpha \pi)^2}.\label{eq:gapconv}
\end{align}
Equation \eqref{eq:gapfirstd} implies that $\textup{Gap}(\alpha,b)$ is decreasing in $b$ on $\left[-\frac{\pi}{2},0\right]$ when $\alpha \in \left(0,\frac{2}{\sqrt{3}\pi}\right]$. This establishes the proposition's first claim.

Equation \eqref{eq:gapconv} says that $\textup{Gap}(\alpha,b)$ is strictly concave in $b$ holding $\alpha$ fixed. We also note that
\[
\frac{\partial \textup{Gap}}{\partial b}\left(\alpha,-\frac{\pi}{2}\right)=\frac{\pi(-4+3\alpha^2\pi^2)}{8(1+\alpha\pi)^2}>0
\]
when $\alpha>\frac{2}{\sqrt{3}\pi}$. It also holds that 
\[
\frac{\partial \textup{Gap}}{\partial b}(\alpha,0)=-\frac{\pi}{2(1+\alpha \pi)}<0,
\]
and so $\textup{Gap}(\alpha,b)$ is maximized at some $b\in \left(-\frac{\pi}{2},0\right)$ when $\alpha>\frac{2}{\sqrt{3}\pi}$. Returning to equation \eqref{eq:gapfirstd}, we note that $\textup{Gap}(\alpha,b)$ is maximized when
\[
b=b_{\textup{crit}}=-\frac{2(1+\alpha \pi)}{\alpha(4+3\alpha \pi)}.
\]
Since
\[\frac{d b_{\textup{crit}}}{d \alpha}= \frac{8+6\alpha \pi(2+\alpha \pi)}{\alpha^2(4+3\alpha \pi)^2}>0,
\]
we conclude that $b_{\textup{crit}}$ continuously increases from $-\frac{\pi}{2}$ towards $0$ as $\alpha$ increases from $\frac{2}{\sqrt{3}\pi}$ to $+\infty$.
\end{proof}

\subsection*{The Neumann problem}
Our approach here is driven by Fourier series. Suppose that $f\in L^1[-\pi,\pi]$ has zero mean. One attempt to solve the Poisson equation $-u''=f$ in $(-\pi,\pi)$ might be to consider the function whose Fourier series is given by
\begin{equation}\label{eq:fourier}
u(x)=\sum_{n\neq 0}\frac{1}{n^2}\hat f(n)e^{inx},
\end{equation}
where $\hat f(n)=\frac{1}{2\pi}\int_{-\pi}^{\pi}f(x)e^{-inx}\,dx$. Formally differentiating the equation above termwise shows
\[
-u''(x)=\sum_{n\neq 0}\hat f(n)e^{inx}=f(x).
\]
We are thus led to consider the function $K$ whose Fourier coefficients are given by
\[
\hat K(n)=
\begin{cases}
\frac{1}{n^2} & \textup{if }n\neq 0,\\
0 & \textup{if }n=0.
\end{cases}
\]
One readily verifies that
\[
K(x)=\frac{1}{2}x^2-\pi |x|+\frac{1}{3}\pi^2, \qquad -\pi \leq x \leq \pi.
\]
Extend $f$ and $K$ to all of $\mathbb{R}$ by $2\pi$-periodicity. The function $u$ in \eqref{eq:fourier} has the property that\
\[
\hat{u}(n)=\hat{K}(n)\hat{f}(n)=\widehat{K\ast f}(n)
\]
which leads us to study the convolution
\begin{equation}\label{eq:udefcont}
u(x)=(K\ast f)(x)=\frac{1}{2\pi}\int_{-\pi}^{\pi}K(x-y)f(y)\,dy.
\end{equation}
We now investigate how the $u$ defined in \eqref{eq:udefcont} differs from the $u$ in Proposition \ref{Prop:Nuniq}. We will see that $u$ defined in \eqref{eq:udefcont} satisfies properties 1, 2, and 4 of Proposition \ref{Prop:Nuniq}. Below, we continue to identify $K$ and $f$ with their $2\pi$-periodic extensions.

\begin{proposition}\label{prop:uconvdef}
The function $u=K\ast f$ defined in \eqref{eq:udefcont} satisfies the following properties:
\begin{enumerate}
\item[1.] $u$ is continuously differentiable on $\mathbb{R}$ with $u'=K'\ast f$.
\item[2.] $u'$ is absolutely continuous on $[-\pi,\pi]$ and
\[
u'(x)-u'(-\pi)=-\int_{-\pi}^xf(y)\,dy,\qquad -\pi \leq x \leq \pi,
\]
with $-u''=f$ a.e. in $(-\pi,\pi)$.
\item[3.] $u'(-\pi)=u'(\pi)=-\frac{1}{2\pi}\int_{-\pi}^{\pi}xf(x)\,dx$.
\item[4.] $\int_{-\pi}^{\pi}u(x)\,dx=0$.
\end{enumerate}
\end{proposition}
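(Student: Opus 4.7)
The plan is to verify each of the four properties directly from the convolution formula $u = K * f$, treating $K$ as its $2\pi$-periodic extension (which is continuous on $\mathbb{R}$ since $K(-\pi)=K(\pi)=-\pi^2/6$). The essential ingredient is an explicit description of the a.e.\ derivative: $K$ is piecewise smooth, and the periodic extension of $K'$ satisfies
\[
K'(t)=t-\pi \text{ on } (0,2\pi), \qquad K'(t)=t+\pi \text{ on } (-2\pi,0),
\]
so $K'$ is bounded and continuous except for jumps of size $-2\pi$ at multiples of $2\pi$.

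For property 1, I would differentiate under the integral sign. Since $|K(x+h-y)-K(x-y)| \le \|K'\|_\infty |h|$ and the difference quotient converges a.e.\ in $y$ to $K'(x-y)$ as $h \to 0$, the dominated convergence theorem yields $u'(x) = \frac{1}{2\pi}\int_{-\pi}^{\pi} K'(x-y) f(y)\,dy$. Continuity of $u'$ follows from a second application of DCT: $K'(x+h-y) \to K'(x-y)$ for a.e.\ $y$ (the exceptional set, where $x-y$ lies in $2\pi\mathbb{Z}$, has measure zero), with the universal bound $2\|K'\|_\infty |f(y)|$.

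Properties 2 and 3 are the core of the argument, and both flow from a single calculation of $u'(x)-u'(-\pi)$. For $y \in (-\pi,\pi)$, the sawtooth formula gives $K'(-\pi-y) = -y$, and a short case check yields
\[
K'(x-y)-K'(-\pi-y) = \begin{cases} x-\pi & \text{if } y<x,\\ x+\pi & \text{if } y>x.\end{cases}
\]
Splitting the convolution integral at $y=x$ and writing $F(x) = \int_{-\pi}^{x} f$, we obtain
\[
2\pi\bigl(u'(x)-u'(-\pi)\bigr) = (x-\pi)F(x) + (x+\pi)\bigl(F(\pi)-F(x)\bigr) = -2\pi F(x),
\]
where the final equality uses the zero-mean hypothesis $F(\pi)=0$. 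This delivers the integral identity in property 2, from which absolute continuity of $u'$ and $-u''=f$ a.e.\ are immediate; evaluating at $x=\pi$ gives $u'(\pi)=u'(-\pi)$. The explicit value in property 3 then drops out of $u'(-\pi) = \frac{1}{2\pi}\int_{-\pi}^{\pi} K'(-\pi-y) f(y)\,dy$ together with $K'(-\pi-y)=-y$.

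Property 4 is a Fubini exercise: $\int_{-\pi}^{\pi} u\,dx = \frac{1}{2\pi}\int_{-\pi}^{\pi} f(y) \left[\int_{-\pi}^{\pi} K(x-y)\,dx\right] dy$, and by $2\pi$-periodicity the inner integral equals $\int_{-\pi}^{\pi} K\,dw = 0$ (a one-line direct computation; the vanishing of $\int f$ would also suffice). The main obstacle will be the careful bookkeeping of $K'$'s values on the stretched domain $[-2\pi,2\pi]$ needed to verify the two-case formula for $K'(x-y)-K'(-\pi-y)$; once that identity is in hand, the telescoping cancellation that produces $-2\pi F(x)$ is the linchpin of the whole proposition.
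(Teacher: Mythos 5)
Your proof is correct and follows essentially the same route as the paper's: both rest on the explicit piecewise-linear formula for the periodic extension of $K'$ and a direct computation of $u'=K'\ast f$ that invokes the zero-mean hypothesis. The only cosmetic differences are that you organize property 2 around the difference $u'(x)-u'(-\pi)$ rather than differentiating the change-of-variables representation of $K'\ast f$, and you prove property 4 via Fubini and $\int_{-\pi}^{\pi}K\,dw=0$ where the paper reads it off from $\hat u(0)=\hat K(0)\hat f(0)=0$.
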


\begin{proof}
Property 1 is a standard fact about convolutions. For property 2, we compute
\begin{align*}
(K'\ast f)(x)&=\frac{1}{2\pi}\int_{-\pi}^{\pi}K'(y)f(x-y)\,dy\\
&=\frac{1}{2\pi}\int_{-\pi}^{0}(y+\pi)f(x-y)\,dy+\frac{1}{2\pi}\int_{0}^{\pi}(y-\pi)f(x-y)\,dy\\
&=\frac{1}{2\pi}\int_{x-\pi}^x(x-y-\pi)f(y)\,dy+\frac{1}{2\pi}\int_{x}^{x+\pi}(x-y+\pi)f(y)\,dy,
\end{align*}
and from this representation it follows that $(K'\ast f)(x)$ is absolutely continuous on $[-\pi,\pi]$. Its derivative equals $-f(x)$ by direct calculation, for almost every $x$. Thus property 2 holds. For property 3, one sees from the above calculation
\[
(K'\ast f)(-\pi)=(K'\ast f)(\pi)=-\frac{1}{2\pi}\int_{-\pi}^{\pi}yf(y)\,dy.
\]
To establish property 4 we simply observe
\[
\frac{1}{2\pi}\int_{-\pi}^{\pi}u(x)\,dx=\hat{u}(0)=\hat{K}(0) \hat{f}(0)=0.
\]
\end{proof}

A consequence of Proposition \ref{prop:uconvdef} is:

\begin{proposition}\label{prop:uisconvwf}
Let $f$ and $u$ be as in Proposition \ref{Prop:Nuniq}. If $\int_{-\pi}^{\pi}xf(x)\,dx=0$, then $u$ is given by convolution as in equation \eqref{eq:udefcont}.
\end{proposition}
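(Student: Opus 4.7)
The plan is to invoke the uniqueness clause of Proposition \ref{Prop:Nuniq} and verify that the convolution $K \ast f$ satisfies each of its four defining properties. Since Proposition \ref{prop:uconvdef} has already done almost all of the work, the task reduces to checking that the extra hypothesis $\int_{-\pi}^{\pi} x f(x)\,dx = 0$ kills the one remaining obstruction.

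First I would set $w = K \ast f$ and run down the list. Property 1 of Proposition \ref{prop:uconvdef} gives that $w \in C^1(\mathbb{R})$; property 2 gives that $w'$ is absolutely continuous on $[-\pi,\pi]$ with $-w'' = f$ almost everywhere on $(-\pi,\pi)$; and property 4 gives that $w$ has zero mean on $[-\pi,\pi]$. These match properties 1, 2, and 4 of Proposition \ref{Prop:Nuniq} verbatim.

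The only nontrivial step is verifying the Neumann boundary condition $w'(-\pi) = w'(\pi) = 0$. Here I would use property 3 of Proposition \ref{prop:uconvdef}, which evaluates both boundary derivatives to $-\frac{1}{2\pi}\int_{-\pi}^{\pi} x f(x)\,dx$. Under the hypothesis that this first-moment integral vanishes, we immediately obtain $w'(-\pi) = w'(\pi) = 0$, so property 3 of Proposition \ref{Prop:Nuniq} holds as well. At this point $w$ satisfies all four defining properties of Proposition \ref{Prop:Nuniq}, and uniqueness forces $w = u$, i.e.\ $u = K \ast f$.

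I do not anticipate any genuine obstacle: the proof is essentially a bookkeeping exercise that recognizes the moment condition $\int_{-\pi}^{\pi} xf\,dx = 0$ as precisely the defect between the boundary values already computed in Proposition \ref{prop:uconvdef} and the homogeneous Neumann condition required by Proposition \ref{Prop:Nuniq}. The only mild subtlety worth flagging is that uniqueness in Proposition \ref{Prop:Nuniq} is what closes the argument, so the write-up should explicitly invoke it after the four properties have been checked.
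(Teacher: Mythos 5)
Your proof is correct and is exactly the argument the paper intends: the paper states this proposition without proof as "a consequence of Proposition \ref{prop:uconvdef}," and your write-up simply makes explicit the verification of the four properties of Proposition \ref{Prop:Nuniq} for $K\ast f$ (with the moment condition supplying the Neumann boundary condition) followed by the appeal to uniqueness.
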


Before proving Theorem \ref{Th:NComp}, we first prove a preliminary comparison result.

\begin{theorem}\label{Th:Nprelim}
Let $f\in L^1[-\pi,\pi]$ where $\int_{-\pi}^{\pi}f(x)\,dx=\int_{-\pi}^{\pi}xf(x)\,dx=0$. Let $u$ and $v$ solve the Poisson problems
\begin{align*}
-u''&=f  \quad \textup{in} \quad (-\pi,\pi), \qquad u'(-\pi)=u'(\pi)=0,\\
-v''&=f^{\#}  \quad \textup{in} \quad (-\pi,\pi), \qquad v'(-\pi)=v'(\pi)=0,
\end{align*}
where $f^{\#}$ is the decreasing rearrangement of $f$, and $\int_{-\pi}^{\pi}u(x)\,dx=\int_{-\pi}^{\pi}v(x)\,dx=0$. Then
\[
u^{\bigstar}(t)\leq  \int_{-\frac{t}{2}}^{\frac{t}{2}}v(x)\,dx \leq v^{\bigstar}(t)
\]
on $[0,2\pi]$. 
\end{theorem}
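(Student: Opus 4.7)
The plan is to imitate the convolution-plus-rearrangement strategy used for the Robin case, but to exploit the fact that the periodic Green's function $K$ for the Neumann problem is already symmetric decreasing on its period, so a single application of Baernstein's inequality suffices (no Hardy--Littlewood or Riesz--Sobolev decomposition needed).

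First I would set up the convolution representations for both $u$ and $v$. The hypothesis $\int_{-\pi}^{\pi}xf\,dx=0$ together with $\int_{-\pi}^{\pi}u\,dx=0$ lets me invoke Proposition \ref{prop:uisconvwf} to write $u=K\ast f$. To get the same representation for $v$, I need to check that $f^{\#}$ also has vanishing first moment; but $f^{\#}$ is even by construction, so $\int_{-\pi}^{\pi}xf^{\#}(x)\,dx=0$ automatically. Combined with the assumption $\int_{-\pi}^{\pi}v\,dx=0$, Proposition \ref{prop:uisconvwf} yields $v=K\ast f^{\#}$ as well.

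Next I would verify that the kernel $K(x)=\tfrac{1}{2}x^2-\pi|x|+\tfrac{\pi^2}{3}$ is symmetric decreasing on $[-\pi,\pi]$: it is even, and for $x\in(0,\pi)$ one has $K'(x)=x-\pi<0$. Moreover $K(-\pi)=K(\pi)$, so the $2\pi$-periodic extension of $K$ (the one used in the definition of the convolution) is continuous and still symmetric decreasing on every period. Now for an arbitrary measurable $E\subseteq[-\pi,\pi]$ with $|E|=t$, I would write
\[
\int_E u(x)\,dx=\frac{1}{2\pi}\int_{-\pi}^{\pi}\int_{-\pi}^{\pi}\chi_E(x)f(y)K(x-y)\,dy\,dx
\]
and apply Baernstein's inequality (Theorem \ref{th:Baernstein}) with $\chi_E,f\in L^{1}$ and $K\in L^{\infty}$ (extended periodically); since $K^{\#}=K$, this produces
\[
\int_E u(x)\,dx\leq\frac{1}{2\pi}\int_{-\pi}^{\pi}\int_{-\pi}^{\pi}\chi_{[-t/2,t/2]}(x)f^{\#}(y)K(x-y)\,dy\,dx=\int_{-t/2}^{t/2}v(x)\,dx.
\]
Taking the supremum over all measurable $E$ of length $t$ gives $u^{\bigstar}(t)\leq\int_{-t/2}^{t/2}v(x)\,dx$, and the final inequality $\int_{-t/2}^{t/2}v(x)\,dx\leq v^{\bigstar}(t)$ is immediate from the definition of the star function (the interval $[-t/2,t/2]$ is an admissible competitor in the supremum).

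The main obstacle I anticipate is not analytic but structural: making sure both $u$ and $v$ are representable via the same periodic convolution kernel. For $u$ this uses the hypothesis on the first moment of $f$ directly; for $v$ one must notice the automatic vanishing of the first moment of $f^{\#}$. Once both representations are in place, the fact that $K$ coincides with its symmetric decreasing rearrangement on $[-\pi,\pi]$ makes the Baernstein step a single, clean application rather than the three-part decomposition (Hardy--Littlewood on the $x^2$ and $y^2$ terms, Baernstein on the periodic part of $|x-y|$, and Riesz--Sobolev on the non-periodic remainder) required in the proof of Theorem \ref{Th:RComp}. No extra positivity or sign information on $u$ and $v$ is needed at this stage, which is consistent with the weaker (two-sided) conclusion of the Neumann comparison.
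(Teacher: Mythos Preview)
Your proposal is correct and follows essentially the same approach as the paper's proof: both use Proposition~\ref{prop:uisconvwf} (noting $f^{\#}$ is even, so its first moment vanishes) to write $u=K\ast f$ and $v=K\ast f^{\#}$, then apply Baernstein's inequality (Theorem~\ref{th:Baernstein}) with the observation that $K=K^{\#}$, and finish by taking the supremum over $E$ of length $t$. Your added verification that $K$ is symmetric decreasing and your explicit remark on the final inequality $\int_{-t/2}^{t/2}v\,dx\leq v^{\bigstar}(t)$ are details the paper leaves implicit, but the argument is the same.
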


\begin{proof}
By Proposition \ref{prop:uisconvwf}, we have $u=K\ast f$ and $v=K\ast f^{\#}$ (because $f^{\#}$ is even, and so $\int_{-\pi}^{\pi}xf^{\#}(x)\,dx=0$). Fix $t\in [0,2\pi]$ and let $E\subseteq [-\pi,\pi]$ denote a measurable subset of length $t$. Applying Theorem \ref{th:Baernstein} we see
\[
\int_Eu\,dx=\int_E K\ast f\,dx \leq \int_{E^{\#}}K\ast f^{\#}\,dx=\int_{E^{\#}}v\,dx.
\]
Taking the $\sup$ in the above inequality over all measurable subsets $E$ of $[-\pi,\pi]$ of length $t$, we obtain the desired conclusion.
\end{proof}

We are now prepared to prove our paper's second main result.

\begin{proof}[Proof of Theorem \ref{Th:NComp}]
Given $f\in L^1[0,\pi]$, extend $f$ to $[-\pi,\pi]$ by even reflection, and denote this extension by $\tilde f$. Observe that $\int_{-\pi}^{\pi}x\tilde f(x)\,dx=0$. Clearly,
\begin{equation}\label{eq:sdrftild}
(\tilde f)^{\#}(x)=f^{\ast}(x),\qquad 0\leq x\leq \pi.
\end{equation}
Let $u$ correspond to $f$ in the analogue of Proposition \ref{Prop:Nuniq} over $[0,\pi]$ and similarly let $v$ correspond to $f^{\ast}$. Let $\tilde u$ and $\tilde v$ correspond to $\tilde f$ and $(\tilde f)^{\#}$ in the $[-\pi,\pi]$ version of Proposition \ref{Prop:Nuniq}. Proposition \ref{prop:uisconvwf} gives $\tilde u = K\ast \tilde f$ and $\tilde v = K \ast (\tilde f)^{\#}$.

We claim that $\tilde u$ is obtained from $u$ by even reflection. We show that $\tilde u$ is even and that $\tilde u$ also satisfies the properties of Proposition \ref{Prop:Nuniq} corresponding to $f$ over the interval $[0,\pi]$. First, $\tilde u$ is even:
\[
\tilde u(x)=\frac{1}{2\pi}\int_{-\pi}^{\pi}K(x-y)\tilde f(y)\,dy=\frac{1}{2\pi}\int_{-\pi}^{\pi}K(x+y)\tilde f(y)\,dy=\frac{1}{2\pi}\int_{-\pi}^{\pi}K(-x-y)\tilde f(y)\,dy=\tilde u(-x).
\]
Since $\tilde u\in C^1[-\pi,\pi]$ and is even, we must have $\tilde u'(0)=0$. Additionally, we have $\tilde u'(\pi)=0$ by assumption. Again, being even implies $\int_{0}^{\pi}\tilde u(x)\,dx=\frac{1}{2}\int_{-\pi}^{\pi}\tilde u(x)\,dx=0$. Hence by uniqueness, $\tilde u(x)=u(x)$ on $[0,\pi]$ and so $(\tilde u)^{\#}(x)=u^{\ast}(x)$ on $[0,\pi]$. We similarly have $\tilde v(x)=v(x)$ on $[0,\pi]$. By Theorem \ref{Th:Nprelim}, we have for each $0\leq t\leq 2\pi$
\[
\int_{-\frac{t}{2}}^{\frac{t}{2}}(\tilde u)^{\#}(x)\,dx \leq \int_{-\frac{t}{2}}^{\frac{t}{2}}\tilde v(x)\,dx
\]
which implies
\[
\int_0^{\frac{t}{2}}(\tilde u)^{\#}(x)\,dx\leq \int_0^{\frac{t}{2}}\tilde v(x)\,dx
\]
finally giving
\[
\int_0^{\frac{t}{2}}u^{\ast}(x)\,dx\leq \int_0^{\frac{t}{2}}v(x)\,dx.
\]
This last inequality implies $u^{\bigstar} \leq v^{\bigstar}$ on $[0,\pi]$ and the theorem's claims on convex means follows from Proposition \ref{Prop:Majorization}. The remaining conclusions follow from Proposition \ref{cor:Lpnormsgen} since $u$ and $v$ have zero mean by assumption.
\end{proof}

\end{document}